\let \div \relax
\colorlet{myRed}{red} 
\colorlet{myRed}{black} 
\newtheorem{theorem}{Theorem}[section]
\newtheorem{defin}[theorem]{Definition}
\newtheorem{lemma}[theorem]{Lemma}
\newtheorem{corollary}[theorem]{Corollary}
\newtheorem{rem}[theorem]{Remark}
\DeclareMathOperator{\E}{\mathbb{E}}
\title[Homogenization of full NSF in random perforation]
{Homogenization of the full compressible Navier-Stokes-Fourier system in\\ randomly perforated domains
}
\date{}
\author{Florian Oschmann}
\begin{document}
\maketitle


\begin{abstract}
We consider the homogenization of the compressible Navier-Stokes-Fourier equations in a randomly perforated domain in $\R^3$. Assuming that the particle size scales like $\e^\alpha$, where $\e>0$ is their mutual distance and $\alpha>3$, we show that in the limit $\e\to 0$, the velocity, density, and temperature converge to a solution of the same system. We follow the methods of Lu and Pokorn\'{y} [\text{https://doi.org/10.1016/j.jde.2020.10.032}], where they considered the full system in periodically perforated domains.
\end{abstract}

\section{Introduction}\label{sec:Introd}
We consider a bounded smooth domain $D\subset\R^3$ which for $\eps > 0$ is perforated by random balls $B_{\eps^{\alpha} r_i}(\eps z_i)$ with $\alpha > 3$, and show that solutions to the compressible Navier-Stokes-Fourier equations in this domain converge as $\e\to 0$ to a solution of the same system of equations in $D$.

There is a vast of literature concerning the homogenization of fluid flows in perforated domains. We will just cite a few. For incompressible fluids and a periodic perforation, Allaire found in \cite{Allaire90a} and \cite{Allaire90b} that, concerning the ratios of particle size and distance, there are mainly three regimes of particle sizes $\e^\alpha$, where $\alpha\geq 1$. Heuristically, if the particles are large, the velocity will slow down and finally stop. This phenomenon occurs if (in three dimensions) $\alpha\in [1,3)$ and gives rise to Darcy's law. When the particles are too small, i.e., $\alpha>3$, they should not affect the fluid, yielding that in the limit, the fluid motion is still governed by the Stokes or Navier-Stokes equations. The third regime is the so-called critical case $\alpha=3$, where the particles are large enough to put some friction on the fluid, but not too large to stop the flow. For incompressible fluids, the non-critical cases $\alpha\in (1,3)$ and $\alpha>3$ were considered in \cite{Allaire90b}, where \cite{Allaire90a} dealt with the critical case $\alpha=3$. The case $\alpha=1$ was treated in \cite{Allaire89}. In all the aforementioned literature, the proofs were given by means of suitable oscillating test functions, first introduced by Tartar in \cite{Tartar1980} and later adopted by Cioranescu and Murat in \cite{CioranescuMurat82} for the Poisson equation. The results obtained by Cioranescu and Murat and also those of Allaire can further be generalized to the case of random distributions and random radii $r_i\e^\alpha$, $r_i\geq 0$. This was done for the critical case $\alpha=3$ by Giunti, Höfer, and Vel\'{a}zquez for the Poisson equation in \cite{GiuntiHoeferVelazquez18} and by Giunti and Höfer for the Stokes equations in \cite{GiuntiHoefer19}, where they recovered Brinkman's law as in the periodic situation. The case $\alpha\in (1,3)$ was recently treated by Giunti in \cite{Giunti21}, where they recovered Darcy's law.\\

Unlike as for incompressible fluids, the homogenization theory for compressible fluids is rather sparse and focuses mainly on deterministic radii $\e^\alpha$ and a periodic distribution of holes. Masmoudi considered in \cite{Masmoudi02} the case $\alpha=1$ of large particles, giving rise to Darcy's law. For large particles with $\alpha\in (1,3)$, Darcy's law was just recently treated in \cite{SchwarzacherDarcy} for a low Mach number limit. Their methods can also be used to treat the critical case $\alpha=3$ \cite{BellaOschmann2021b}. The case of small particles ($\alpha>3$) was treated in \cite{DieningFeireislLu,FeireislLu,LuSchwarzacher} for different growing conditions on the pressure. Random perforations in the spirit of \cite{GiuntiHoefer19} for small particles were considered by Bella and the author in \cite{BellaOschmann2021a}, where in the limit, the equations remain unchanged as in the periodic case.\\

To the best of the author's knowledge, there are only two works dealing with homogenization of the full compressible Navier-Stokes-Fourier system in perforated domains, both assuming periodic distribution of the holes and deterministic radii. The article of Feireisl, Novotn\'{y} and Takahashi \cite{FeireislNovotnyTakahashi10} treats the case where the radii of the obstacles are proportional to their mutual distance. They showed that, after a proper rescaling of the velocity and suitable extensions of the density an temperature, the solutions to the compressible Navier-Stokes-Fourier equations converge to the solution of a Darcy-type law in the limiting domain. The second work is the one of Lu and Pokorn\'{y} \cite{LuPokorny21}, which focuses on the case the radii scale like $\e^\alpha$, $\alpha>3$, where $\e>0$ is the mutual distance between holes. Our methods for the case of randomly distributed holes with random radii $r_i\e^\alpha$, $r_i\geq 0$, $\alpha>3$, are therefore based on their work.\\

To obtain uniform bounds with respect to $\e$ for the solution functions, a key ingredient is the notion of the so-called Bogovski\u{\i} operator $\B_\e$ in the domain $D_\e$, which can be seen as an inverse of the divergence. Such an operator was first studied in \cite{Bogovskii80} and is known to exist for any Lipschitz domain and satisfies the norm bound $\|\B_\e\|\leq C$. However, the constant $C$ depends on the Lipschitz character of the domain $D_\e$, which is unbounded as $\e\to 0$. The key point is to develop uniform bounds for $\B_\e$ as $\e\to 0$. In the case of periodically perforated domains with deterministic radii $\e^\alpha$, $\alpha\geq 1$, this was done in \cite{DieningFeireislLu,FeireislLu,LuSchwarzacher} and recently generalized to the case of random distributions, random radii and $\alpha>2$ in \cite{BellaOschmann2021a}. We will use this Bogovski\u{\i} operator for the random case in order to generalize the results of \cite{LuPokorny21}.\\

{\bf Notation:} Throughout the whole paper, $\omega\in\Omega$, where $(\Omega,\mathcal{F},\P)$ is a suitable probability space for the marked Poisson point process as introduced in Section \ref{sec:Model} below. We further denote by $|S|$ the Lebesgue measure of a measurable set $S\subset\R^3$. We write $a\lesssim b$ whenever there is a constant $C>0$ that does not depend on $\e, a$ and $b$ such that $a\leq C\, b$. The constant $C$ might change its value whenever it occurs. The Frobenius scalar product of two matrices $A,B\in\R^{3\times 3}$ is denoted by $A:B:=\sum_{1\leq i,j\leq 3} A_{ij}B_{ij}$. Further, we use the standard notation for Lebesgue and Sobolev spaces, where we denote this spaces even for vector- or matrix valued functions as in scalar case, e.g., $L^p(D)$ instead of $L^p(D;\R^3)$.\\

{\bf Organization of the paper:} The paper is organized as follows:\\
In Section \ref{sec:Model}, we give a precise definition of the perforated domain $D_\e$ and state our main results for the steady Navier-Stokes-Fourier equations. In Section \ref{sect:ingr}, we establish uniform bounds for the velocity and density. Section \ref{sec:extension} is devoted to extend the temperature in a suitable way to the whole domain $D$, to give uniform bounds for it and to establish a trace estimate on the boundary of holes. In Section \ref{sec:fixedDom}, we show how to pass to the limit $\e\to 0$ and obtain the equations in the limiting domain.

\section{Setting and the Main Results}\label{sec:Model}
In this section we define the perforated domain, formulate the Navier-Stokes-Fourier equations governing the fluid motion, and state the main results. We start with the definition of the perforated domain.
\subsection{The perforated domain}\label{sec:domain}
Let $D \subset \R^3$ be a bounded domain with a $C^2$ boundary. For rescaling arguments, we assume $0\in D$. We model the perforation of $D$ using the Poisson point process, though the arguments can be easily generalized to a larger class of point processes. For an intensity parameter $\lambda > 0$, the Poisson point process is defined as a \emph{random} collection of points $\Phi = \{z_j\}$ in $\R^3$ characterized by the following two properties:
\begin{itemize}
 \item for any two measurable and \emph{disjoint} sets $S_1, S_2 \subset \R^3$, the random variables $S_1 \cap \Phi$ and $S_2 \cap \Phi$ are independent;
 \item for any measurable set $S \subset \R^3$ and $k \in \mathbb{N}$ holds $\mathbb P(N(S) = k) = \frac{(\lambda |S|)^ke^{-\lambda |S|}}{k!}$,
\end{itemize}
where $N(S)=\# (S\cap\Phi)$ counts the number of points $z_j\in S$ and $|S|$ denotes measure of $S$. In addition to the random locations of the balls, modeled by the above Poisson point process, we also assume the balls have random size. For that, let $\mathcal{R} = \{r_i\} \subset [0,\infty)$ be another random process of independent identically distributed random variables with finite moment bound
\begin{equation}\label{Conditionm}
 \E(r_i^{m_r}) < \infty \text{ for some $m_r>0$,}
\end{equation}
and which are independent of $\Phi$. In other words, to each point $z_j \in \Phi$ (center of a ball) we associate also a radius of the ball $r_j \in [0,\infty)$. 
The random process $(\Phi,\mathcal{R})$ on $\R^3 \times \R_+$ is called \emph{marked Poisson point process}, and can be viewed as a random variable $\omega \in \Omega \mapsto (\Phi(\omega),\mathcal{R}(\omega))$, defined on an abstract probability space $(\Omega,\mathcal{F},\mathbb{P})$. 

To define the perforated domain $D_\eps$, for $\alpha > 3$ and $\eps > 0$ we set 
\begin{equation}\label{def:Domain}
 \Phi^{\eps}(D) := \left\{ z \in \Phi \cap \frac1\eps D: \dist(\eps z,\partial D) > \eps\right\}, \quad D_\eps := D \setminus \bigcup_{z_j \in \Phi^{\eps}(D)} B_{\eps^{\alpha} r_j}(\eps z_j).
\end{equation}
To simplify the exposition and to avoid the need to analyze behavior near the boundary, we only removed those balls from $D$ which are not too close to the boundary $\partial D$. This is also a common assumption in the periodic situation, see, e.g., \cite[relation $(1.3)$]{FeireislLu}.

The exact range for the moment bound $m_r$ in \eqref{Conditionm} will be specified later on; we require at least $m_r>3/(\alpha-2)$. As shown in \cite[Theorem 3.1]{BellaOschmann2021a}, we then have the following result, which we state in form of a lemma:
\begin{lemma}\label{lem:MainProp}
Let $(\Phi,\mathcal{R})=(\{z_i\},\{r_i\})$ be a marked Poisson point process as defined above and $D_\e$ be as in \eqref{def:Domain}. Let $\alpha>2$, $m_r>3/(\alpha-2)$, $0<\delta<\alpha-1-\frac{3}{m_r}$, $\kappa\in (\max(1,\delta),\alpha-1-\frac{3}{m_r})$, and $\tau\geq 1$. Then there exists an almost surely positive random variable $\e_0(\omega)$ such that
for every $0<\e\leq \e_0$ holds \label{veta31jedna}
\begin{equation*}
\max_{z_i\in\Phi^\e(D)} \tau \e^\alpha r_i\leq \e^{1+\kappa} 
\end{equation*}
and for every $z_i,z_j\in\Phi^\e(D),\, z_i\neq z_j$, 
\begin{equation*}
\B_{\tau\e^{1+\kappa}}(\e z_i)\cap B_{\tau\e^{1+\kappa}}(\e z_j)=\emptyset.
\end{equation*}

\end{lemma}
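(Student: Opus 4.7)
My plan is to apply the first Borel--Cantelli lemma along a dyadic sequence $\e_n := 2^{-n}$ and then interpolate to cover all small $\e > 0$. The two conclusions correspond to two natural bad events, which I would handle respectively by a first--moment (union bound) argument that uses the moment condition \eqref{Conditionm}, and by a factorial--moment/Mecke bound on close pairs of points of the PPP.

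For each $n \in \mathbb{N}$, set
\begin{align*}
F_n^1 &:= \bigl\{\exists\, z_i \in \Phi \cap \tfrac{1}{\e_{n+1}} D \text{ with } r_i > \e_n^{1+\kappa-\alpha}/\tau\bigr\}, \\
F_n^2 &:= \bigl\{\exists\, z_i \neq z_j \in \Phi \cap \tfrac{1}{\e_{n+1}} D \text{ with } |z_i - z_j| < 2\tau\,\e_n^\kappa\bigr\}.
\end{align*}
These are engineered so that on $(F_n^1 \cup F_n^2)^c$ both conclusions of the lemma hold for \emph{every} $\e \in [\e_{n+1}, \e_n]$. Since $1 + \kappa - \alpha < 0$, the bound $r_i \leq \e_n^{1+\kappa-\alpha}/\tau$ is the strictest value of $\e^{1+\kappa-\alpha}/\tau$ on this dyadic interval and hence forces $\tau \e^\alpha r_i \leq \e^{1+\kappa}$ throughout; similarly, the separation $|\e z_i - \e z_j| \geq 2\tau \e^{1+\kappa}$ is equivalent to $|z_i - z_j| \geq 2\tau \e^\kappa$, and is strictest at $\e = \e_n$. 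Finally, $\Phi^\e(D) \subset \Phi \cap \frac{1}{\e_{n+1}} D$ for $\e \geq \e_{n+1}$, so testing on the larger set suffices.

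For $F_n^1$, independence of $\mathcal{R}$ and $\Phi$, the Markov inequality $\P(r_1 > R) \leq \E[r_1^{m_r}]/R^{m_r}$, and $\E\bigl[N(\tfrac{1}{\e_{n+1}} D)\bigr] = \lambda|D|\e_{n+1}^{-3}$ yield
\[
\P(F_n^1) \lesssim \e_{n+1}^{-3}\,\e_n^{m_r(\alpha-1-\kappa)} \lesssim \e_n^{m_r(\alpha - 1 - \kappa) - 3}.
\]
For $F_n^2$, Mecke's formula bounds the expected number of offending ordered pairs by
\[
\lambda^2 \int_{\frac{1}{\e_{n+1}} D} \int_{\R^3} \mathbf{1}_{|z - z'| < 2\tau \e_n^\kappa}\, dz'\,dz \lesssim \e_{n+1}^{-3}\, \e_n^{3\kappa} \lesssim \e_n^{3\kappa - 3}.
\]
The upper bound $\kappa < \alpha - 1 - 3/m_r$ makes the first exponent strictly positive and the lower bound $\kappa > 1$ makes the second strictly positive, so both series $\sum_n \P(F_n^j)$ are geometrically summable; note that the hypothesis $m_r > 3/(\alpha - 2)$ is precisely what guarantees the admissible window $(1, \alpha - 1 - 3/m_r)$ is nonempty. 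The first Borel--Cantelli lemma then produces a $\P$-a.s.\ finite random $n_0(\omega)$ such that $F_n^1 \cup F_n^2$ does not occur for any $n \geq n_0$; setting $\e_0(\omega) := \e_{n_0(\omega)}$ and invoking the monotonicity established above gives both claims for all $\e \in (0, \e_0]$.

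The real content, and the only delicate point, is choosing the right endpoints in the bad events (the larger set $\frac{1}{\e_{n+1}} D$ for the point locations paired with the larger scale $\e_n$ for the thresholds) so that the two Borel--Cantelli series converge exactly on the stated range of $\kappa$; the remaining estimates are routine.
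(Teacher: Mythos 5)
Your proof is correct and is essentially the same argument as the one the paper relies on (the lemma is only cited from Bella--Oschmann, whose proof is exactly this Borel--Cantelli argument along dyadic scales, with a first-moment/Markov bound using \eqref{Conditionm} for the largest mark and a Mecke-type second-factorial-moment bound for close pairs, with the same exponents $m_r(\alpha-1-\kappa)-3$ and $3\kappa-3$). One cosmetic remark: the inclusion $\frac1\e D\subset\frac{1}{\e_{n+1}}D$ for $\e\geq\e_{n+1}$ implicitly uses star-shapedness of $D$ with respect to the origin, which is not assumed; this is harmless, since you can replace $\frac{1}{\e_{n+1}}D$ in the bad events by $B_{R/\e_{n+1}}(0)$ with $D\subset B_R(0)$ without changing any of the estimates.
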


\subsection{The Navier-Stokes-Fourier system}
We consider the stationary compressible Navier-Stokes-Fourier equations in perforated domains $D_\eps$, which describe the steady motion of a compressible and heat conducting Newtonian fluid. For $\eps > 0$, the unknown density $\rho_\eps: D_\e\to[0,\infty)$, velocity $\vb u_\eps:D_\e\to\R^3$, and temperature $\vartheta_\e:D_\e\to (0,\infty)$ of a viscous compressible fluid are described by
\begin{align}
\div(\rho_\e \vb u_\e)&=0 & \text{ in } D_\e,\label{eq:conteq}\\
\div(\rho_\e \vb u_\e\otimes \vb u_\e) + \nabla p(\rho_\e,\vartheta_\e) &= \div\mathbb{S}(\vartheta_\e,\nabla \vb u_\e) + \rho_\e \vb f+\vb g &\text{ in } D_\e,\label{eq:momentum}\\
\div(\rho_\e E\vb u_\e+p(\vartheta_\e,\rho_\e)\vb u_\e-\mathbb{S}(\vartheta_\e,\vb u_\e)\vb u_\e+\vb q_\e)&=(\rho_\e\vb f+\vb g)\cdot\vb u_\e &\text{ in } D_\e,\label{eq:energyBalance}
\end{align}
where $\mathbb{S}$ denotes the Newtonian viscous stress tensor of the form
\begin{align}\label{eq:StressTensor}
\mathbb{S}(\vartheta,\nabla \vb u)=\mu(\vartheta) \bigg(\nabla \vb u+\nabla^T \vb u-\frac23 \div(\vb u)\mathbb{I}\bigg)+\eta(\vartheta)\div(\vb u)\mathbb{I}.
\end{align}
Here we assume the viscosity coefficients $\mu(\cdot),\eta(\cdot)$ being continuous functions on $(0,\infty)$, $\mu(\cdot)$ is moreover Lipschitz continuous, and
\begin{align}\label{eq:ViscosityCoeff}
C_1(1+\vartheta)\leq \mu(\vartheta)\leq C_2(1+\vartheta),\quad 0\leq \eta(\vartheta)\leq C_2(1+\vartheta).
\end{align}
We further impose boundary conditions on $\d D_\e$ as
\begin{align}\label{bdryCond}
\begin{split}
\vb u_\e&=0,\\
\vb q_\e\cdot \vb n&=L(\vartheta_\e-\vartheta_0),
\end{split}
\end{align}
where $\vartheta_0\geq T_0>0$ is a prescribed temperature distribution in $D$ and $L>0$ a given constant, and fix the total mass by
\begin{align}\label{eq:fixedMass}
\int_{D_\e}\rho_\e=M>0,
\end{align}
where $M>0$ is independent of $\e$.

For the constitutive law of the pressure, we assume
\begin{align}\label{constitutiveLaw}
p(\vartheta,\rho)=a\rho^\gamma+c_v(\gamma-1)\rho\vartheta,
\end{align}
where $a>0$, $\gamma>2$ is the adiabatic exponent and $c_v>0$ is the specific heat capacity. Note that the thermodynamic part of the pressure is just the ideal gas law $pV=R\vartheta$ with $V=1/\rho$ and universal gas constant $R=c_p-c_v=c_v(\gamma-1)>0$. The heat flux is governed by Fourier's law
\begin{align}\label{FourierLaw}
\vb q(\vartheta,\nabla\vartheta)=-\kappa(\vartheta)\nabla\vartheta,
\end{align}
where we assume the heat conductivity $\kappa$ to satisfy
\begin{align}\label{HeatConductivity}
C_3(1+\vartheta^{m_\vartheta})\leq \kappa(\vartheta)\leq C_4(1+\vartheta^{m_\vartheta})
\end{align}
for some $m_\vartheta>2$. The total energy is given by
\begin{align}\label{totalEnergy}
E=e+\frac12 |\vb u|^2,
\end{align}
where the specific energy $e$ satisfies Gibb's relation
\begin{align}\label{GibbsRelation}
\frac1\vartheta \bigg(De+p(\vartheta,\rho)D\bigg(\frac1\rho\bigg)\bigg)=Ds(\rho,\vartheta).
\end{align}
Assuming the entropy for an ideal fluid as $s(\rho,\vartheta)=c_v\log\big(\frac{\vartheta}{\rho^{\gamma-1}}\big)$, this leads to
\begin{align}\label{specificEnergy}
e(\rho,\vartheta)=c_v\vartheta+\frac{\rho^{\gamma-1}}{\gamma-1}.
\end{align}
Further, the entropy $s$ fulfils formally the balance of entropy
\begin{align*}
\div\bigg(\rho s\vb u+\frac{\vb q}{\vartheta}\bigg)=\frac{\mathbb{S}:\nabla\vb u}{\vartheta}-\frac{\vb q\cdot\nabla\vartheta}{\vartheta^2}.
\end{align*}
Finally, we assume the external forces $\vb f,\vb g\in L^\infty(\R^3)$.

Since the existence of classical solutions to~\eqref{eq:conteq}--\eqref{eq:energyBalance} is known only if the data are in a certain sense ``small'' (see, e.g., \cite{DaVeiga1987,PiaseckiPokorny2014} and the references therein), we will work with weak solutions, which are known to exist under even weaker assumptions of $m_\vartheta$ and $\gamma$ as made above.

\subsection{Weak formulation and weak solutions}
Here we state the weak formulation of the problem in $D_\e$. To simplify notation, we will identify a function with $D_\e$ as its domain of definition with its zero extension to the whole of $\R^3$.

First, the weak formulation of the continuity equation reads
\begin{align}\label{eq:weakContinuity}
\int_{\R^3} \rho_\e\vb u_\e\cdot\nabla\psi=0
\end{align} 
for all $\psi\in C_c^1(\R^3)$. We will moreover work with a renormalized version of this, that is,
\begin{align}\label{eq:renormalized}
\int_{\R^3} b(\rho_\e)\vb u_\e\cdot\nabla\psi+(b(\rho_\e)-\rho_\e b^\prime(\rho_\e))\div(\vb u_\e)\psi=0
\end{align}
for any $\psi\in C_c^1(\R^3)$ and any $b\in C^1([0,\infty))$ such that $b^\prime\in C_0([0,\infty))$. We remark that the assumptions on $b$ can be relaxed, see, for instance, \cite{DiPernaLions}.

The weak formulation of the momentum equation reads
\begin{align}\label{eq:weakMomentum}
\int_{D_\e} p(\rho_\e,\vartheta_\e)\div \phi+(\rho_\e \vb u_\e\otimes \vb u_\e):\nabla\phi-\mathbb{S}(\vartheta_\e,\nabla \vb u_\e):\nabla\phi+(\rho_\e \vb f+\vb g)\cdot\phi=0
\end{align}
for any $\phi\in C_c^1(D_\e;\R^3)$.

The weak formulation of the energy balance reads
\begin{align}\label{eq:weakEnergy}
-\int_{D_\e}\bigg(\rho_\e E\vb u_\e+p(\vartheta_\e,\rho_\e)\vb u_\e-\mathbb{S}(\vartheta_\e,\nabla\vb u_\e)\vb u_\e+\vb q_\e\bigg)\cdot\nabla\psi+\int_{\d D_\e}L(\vartheta_\e-\vartheta_0)\psi=\int_{D_\e}(\rho_\e\vb f+\vb g)\cdot\vb u_\e \psi
\end{align}
for all $\psi\in C^1(\overline{D_\e})$. Farther, we also have the energy inequality
\begin{align}\label{eq:entropyInequ}
\int_{D_\e}\bigg(\frac{\mathbb{S}(\vartheta_\e,\nabla\vb u_\e):\nabla\vb u_\e}{\vartheta_\e}-\frac{\vb q_\e\cdot\nabla\vartheta_\e}{\vartheta_\e^2}\bigg)\psi+\int_{\d D_\e}\frac{L\vartheta_0}{\vartheta_\e}\psi\leq -\int_{D_\e} \bigg(\rho_\e s(\vartheta_\e,\rho_\e)\vb u_\e+\frac{\vb q_\e}{\vartheta_\e}\bigg)\cdot\nabla\psi+L\int_{\d D_\e}\psi
\end{align}
for all $\psi\in C^1(\overline{D_\e})$ with $\psi\geq 0$.

\begin{defin}\label{defin:weakSol}
The triple $(\rho,\vb u, \vartheta)$ is said to be a \emph{renormalized weak entropy solution} to problem \eqref{eq:conteq}--\eqref{specificEnergy} if $\rho\geq 0,\vartheta>0$ a.e.~in $D_\e$, $\rho\in L^\gamma(D_\e)$, $u\in H_0^1(D_\e;\R^3)$, $\vartheta^{m_\vartheta/2}$ and $\log\vartheta\in H^1(D_\e)$ such that $\rho |\vb u|^3$, $|\mathbb{S}(\vartheta,\nabla\vb u)\vb u|$ and $p(\vartheta,\rho)|\vb u|\in L^1(D_\e)$, and the relations \eqref{eq:weakContinuity}--\eqref{eq:entropyInequ} are fulfiled.
\end{defin}

For $\e>0$ fixed, the existence of weak solutions is guaranteed by the following result, see \cite{NovotnyPokorny2011} for details:
\begin{theorem}
Let $\vb f,\vb g\in L^\infty(\R^3)$, $\vartheta_0\in L^1(\d D_\e)$, $\vartheta_0\geq T_0>0$ a.e.~on $\d D_\e$, $L>0$ and $M>0$. Let $\gamma>\frac53$ and $m_\vartheta>1$. Then there exists a renormalized weak entropy solution $(\rho,\vb u,\vartheta)$ to problem \eqref{eq:conteq}--\eqref{specificEnergy} in the sense of Definition \ref{defin:weakSol}.
\end{theorem}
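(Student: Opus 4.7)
The plan is to treat this existence theorem along the lines of the constructive scheme of Novotn\'{y}--Pokorn\'{y}, i.e., via a multi-level approximation whose successive limits preserve the weak formulation~\eqref{eq:weakContinuity}--\eqref{eq:entropyInequ}. Since $\e>0$ is fixed, the domain $D_\e$ is a bounded Lipschitz (in fact $C^2$ away from the holes) open set, so classical tools like the Bogovski\u{\i} operator, Korn's inequality, and the div-curl lemma are available on $D_\e$ without any $\e$-uniformity considerations.

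The first step is to regularize \eqref{eq:conteq}--\eqref{eq:energyBalance} by introducing an artificial pressure $p_\delta(\rho,\vartheta)=p(\rho,\vartheta)+\delta(\rho^\beta+\rho^2)$ for some $\beta\gg \gamma$, an elliptic regularization $-\e_1\Delta\rho$ of the continuity equation together with a boundary condition $\nabla\rho\cdot\vb n=0$, and the additional dissipative terms $\e_1\nabla\rho\cdot\nabla\vb u$ and $\e_1\nabla\rho\cdot\nabla(e+|\vb u|^2/2)$ in the momentum and energy equations to maintain their distributional form after mollification. Simultaneously one replaces the energy balance \eqref{eq:energyBalance} by the internal-energy balance plus the entropy inequality, which is the standard formulation making the $L^\infty$-truncations of $\vartheta$ compatible with the viscous stress tensor. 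Existence at this regularized level is then obtained by a Galerkin scheme for the momentum equation coupled to the (now uniformly parabolic) continuity and temperature equations; a Leray--Schauder fixed-point argument closes the coupling, using the strict positivity of $\kappa(\vartheta)$ from \eqref{HeatConductivity} and the coercivity of $\mathbb{S}$ from \eqref{eq:ViscosityCoeff}.

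The main work is then to pass to the limit in the three successive parameters (Galerkin dimension $N\to\infty$, then $\e_1\to 0$, then $\delta\to 0$). Uniform estimates come from testing the momentum equation against $\vb u$ and combining it with the total energy balance, from the entropy inequality \eqref{eq:entropyInequ} tested with $1$ (which together with the boundary term $L\vartheta_0/\vartheta$ yields $L^{3m_\vartheta}$-control of $\vartheta$ via Sobolev embedding of $\vartheta^{m_\vartheta/2}\in H^1$), and from the pressure estimate obtained by using the Bogovski\u{\i} operator on $D_\e$ with right-hand side $\rho^\theta-\langle\rho^\theta\rangle$ as a test function in the momentum equation. The compactness of $\vb u$ and $\vartheta$ follows from these bounds by Rellich--Kondrachov; the hard part, as always in this theory, is the strong convergence of the density.

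For that, one establishes the effective viscous flux identity
\begin{equation*}
\overline{p(\rho,\vartheta)\rho}-(\tfrac{4}{3}\mu(\vartheta)+\eta(\vartheta))\overline{\rho\div\vb u}=\overline{p(\rho,\vartheta)}\,\rho-(\tfrac{4}{3}\mu(\vartheta)+\eta(\vartheta))\rho\div\vb u
\end{equation*}
via the div-curl lemma applied to suitable commutators involving the Riesz operator $\nabla\Delta^{-1}\div$, then propagates the oscillation defect measure through the renormalized continuity equation \eqref{eq:renormalized} using the Feireisl--Novotn\'{y}--Petzeltov\'{a} oscillation estimate; the bound $\gamma>2$ assumed in \eqref{constitutiveLaw} (which is stronger than the $\gamma>5/3$ required in the incompressible-pressure argument of Lions) gives the required integrability to justify taking $b(\rho)=\rho\log\rho$ and deducing $\overline{\rho\log\rho}=\rho\log\rho$, i.e., strong convergence $\rho_n\to\rho$ in $L^1(D_\e)$. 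Finally, once strong convergence of $\rho$, $\vb u$, and $\vartheta$ is in hand, passing to the limit in \eqref{eq:weakContinuity}--\eqref{eq:entropyInequ} is routine, and the resulting triple satisfies Definition~\ref{defin:weakSol}. The delicate step throughout is reconciling the temperature-dependent viscosity $\mu(\vartheta)$ in the effective viscous flux with the weak convergence of $\vartheta$, which is handled by first extracting almost-everywhere convergence of $\vartheta$ from the $H^1$-bound on $\vartheta^{m_\vartheta/2}$.
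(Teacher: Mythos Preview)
The paper does not actually prove this theorem: it is stated as a black-box existence result with the one-line justification ``see \cite{NovotnyPokorny2011} for details''. Your sketch is a faithful high-level summary of precisely the Novotn\'{y}--Pokorn\'{y} scheme the paper is citing (artificial pressure, elliptic regularization of the continuity equation, Galerkin approximation, entropy-based temperature estimates, effective viscous flux identity, and oscillation defect measure for strong density convergence), so in that sense your approach coincides with the paper's.

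One small correction is worth making. You justify the renormalization step by invoking ``the bound $\gamma>2$ assumed in \eqref{constitutiveLaw}'', but \eqref{constitutiveLaw} is merely the functional form of the pressure and carries no restriction on $\gamma$; the hypothesis of the present theorem is only $\gamma>\tfrac{5}{3}$ (and $m_\vartheta>1$), and the Novotn\'{y}--Pokorn\'{y} argument is designed to work at exactly that threshold. The stronger assumption $\gamma>2$ enters the paper only later, in Theorem~\ref{thm:main}, where it is needed for the $\e$-uniform Bogovski\u{\i} estimate via condition \eqref{Conditionq}, not for existence at fixed $\e$. Apart from this misattribution, your outline is sound.
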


\subsection{Main result}
Before stating our main result concerning the Navier-Stokes-Fourier system \eqref{eq:conteq}--\eqref{eq:energyBalance}, we want to state a result on the existence and boundedness of an inverse to the divergence operator, which is proven in \cite[Theorem 2.1]{BellaOschmann2021a}.

\begin{lemma}\label{MainBog}
Let $(\Phi,\mathcal{R})$ be a marked Poisson point process as defined in Section \ref{sec:domain} and $D_\e$ be as in \eqref{def:Domain}. Let $\alpha>2$ and $m_r>3/(\alpha-2)$. Then, for all $1<q<3$ satisfying
\begin{align}\label{Conditionq}
\alpha-\frac{3}{m_r}>\frac{3}{3-q},
\end{align}
there exists an almost surely positive random variable $\e_0(\omega)$ such that for all ${0<\e\leq \e_0}$ there exists a bounded linear operator
\begin{align*}
\B_\e:L^q(D_\e)/\R\to W_0^{1,q}(D_\e;\R^3)
\end{align*}
such that for all $f\in L^q(D_\e)$ with $\int_{D_\eps} f = 0$
\begin{align*}
\div (\B_\e(f))=f \text{ in } D_\e,\quad \|\B_\e(f)\|_{W_0^{1,q}(D_\e)}\leq C  \, \|f\|_{L^q(D_\e)},
\end{align*}
where the constant $C>0$ is independent of $\omega$ and $\e$.  
\end{lemma}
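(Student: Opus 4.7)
The plan is to construct $\B_\e$ by combining the classical Bogovski\u{\i} operator on the smooth unperforated domain $D$ with a local correction on each hole annulus, exploiting the separation and size control provided by Lemma~\ref{lem:MainProp}. Given $f\in L^q(D_\e)$ with $\int_{D_\e}f=0$, I would first extend $f$ by zero to $\tilde f\in L^q(D)$ of vanishing mean and let $v:=\B_D\tilde f\in W_0^{1,q}(D;\R^3)$ solve $\div v=\tilde f$ with $\|v\|_{W_0^{1,q}(D)}\lesssim \|f\|_{L^q(D_\e)}$; since $D$ is $C^2$, the classical Bogovski\u{\i} operator supplies this with an $\e$-independent constant.

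The field $v$ does not vanish on the hole boundaries, so it has to be modified locally. On each annulus $A_i^\e:=B_{\tau\e^{1+\kappa}}(\e z_i)\setminus\overline{B_{\e^\alpha r_i}(\e z_i)}$ I would seek $u_i\in W^{1,q}(A_i^\e;\R^3)$ with $\div u_i=f$ in $A_i^\e$, $u_i=v$ on the outer sphere, and $u_i=0$ on the inner sphere; the necessary compatibility $\int_{A_i^\e} f=\int_{\partial B_{\tau\e^{1+\kappa}}(\e z_i)} v\cdot \vb n$ follows at once from $\div v=\tilde f$ on the enlarged ball together with $\tilde f=0$ inside the hole. Defining $\B_\e f$ to be $v$ outside the enlarged balls, $u_i$ on each $A_i^\e$, and $0$ inside each hole yields an element of $W^{1,q}_0(D_\e;\R^3)$ with $\div(\B_\e f)=f$. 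What makes the local solve uniform is Lemma~\ref{lem:MainProp}: the bound $\tau\e^\alpha r_i\le \e^{1+\kappa}$ means that after rescaling $A_i^\e$ to outer radius one, the inner radius lies below $1/\tau^2$, so the rescaled annulus has Lipschitz character uniform in $i$ and $\e$, and a local Bogovski\u{\i} and boundary-lifting operator with constant $C_\tau$ is available there.

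To obtain the uniform norm bound I would sum the local estimates. The disjointness of the enlarged balls (also from Lemma~\ref{lem:MainProp}) makes $\sum_i\|v\|^q_{W^{1,q}(A_i^\e)}\le \|v\|^q_{W^{1,q}(D)}$ and $\sum_i\|f\|^q_{L^q(A_i^\e)}\le \|f\|^q_{L^q(D_\e)}$, so these contributions are already harmless. What remains is the lifting term of $v$ along the outer spheres, which after scaling back produces a geometric factor involving a negative power of $\e^\alpha r_i$ relative to $\e^{1+\kappa}$. Combining the almost-sure growth $\#\Phi^\e(D)\lesssim \e^{-3}$ with the i.i.d.\ moment bound \eqref{Conditionm} on the $r_i$ and the law of large numbers, this random sum is almost surely bounded uniformly in $\e$ precisely when the arithmetic balance $\alpha-3/m_r>3/(3-q)$ holds, i.e.\ exactly under assumption \eqref{Conditionq}.

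The hard part will be this last bookkeeping step: one must carefully track how the local operator constant on $A_i^\e$ depends, after scaling, on both the size $\e^{1+\kappa}$ and the potentially large random factor $r_i$, and then verify that the matching exponent falls within the moment range supplied by \eqref{Conditionm}. This balance is what dictates the admissible range of $q$ in \eqref{Conditionq} and constitutes the real technical content of the proof.
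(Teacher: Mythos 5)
The paper does not actually prove Lemma \ref{MainBog}: it is quoted verbatim from \cite[Theorem 2.1]{BellaOschmann2021a}. Your architecture — extend $f$ by zero, apply the classical Bogovski\u{\i} operator on the smooth domain $D$, correct locally near each hole using the separation and size control of Lemma \ref{lem:MainProp}, and close the argument with a stochastic bookkeeping based on $\#\Phi^\e(D)\lesssim \e^{-3}$, the moment bound \eqref{Conditionm} and the law of large numbers — is indeed the architecture of that cited proof, and your compatibility check for the local boundary value problems on the annuli $A_i^\e$ is correct.

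However, the step you defer as ``the last bookkeeping step'' is not an afterthought; it is the whole content of the lemma, and the one quantitative claim you do make about it is wrong. After rescaling $A_i^\e$ to outer radius $1$, the inner radius equals $\e^{\alpha-1-\kappa}r_i/\tau$, which is only bounded \emph{above} by $1/\tau^2$ and in general tends to $0$; the family $B_1\setminus\overline{B_\delta}$ with $\delta\to 0$ does not have uniform Lipschitz character, so no $\delta$-independent constant $C_\tau$ for the combined Bogovski\u{\i}/trace-lifting problem can be invoked off the shelf. Indeed, if such a uniform constant existed, the scale invariance of the estimate $\|\nabla u\|_{L^q}\lesssim \|f\|_{L^q}$ would produce a uniformly bounded $\B_\e$ for every $1<q<3$ with no constraint linking $\alpha$, $q$ and $m_r$, i.e.\ hypothesis \eqref{Conditionq} would be vacuous — contradicting both your own later assertion that the sums converge ``precisely when'' \eqref{Conditionq} holds and the known behaviour of the operator norm, which degrades like a negative power of $\e$ when $(3-q)(\alpha-\tfrac{3}{m_r})-3$ becomes negative. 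The true mechanism is that the lifting of $v$ must be realized through a cutoff degenerating at the scale of the hole, producing the term $(\e^\alpha r_i)^{-1}\|v\|_{L^q(B_{2\e^\alpha r_i}(\e z_i))}$ in the divergence data; this is not controlled by disjointness of the annuli, but by splitting $v$ into a cell average plus a fluctuation, estimating the fluctuation via Sobolev--Poincar\'e at the cell scale and the average against $|B_{2\e^\alpha r_i}|^{1/q}$, and then summing over the $O(\e^{-3})$ holes with H\"older and Lemma \ref{lem:SLLN} applied to $\sum_i r_i^{m}$ for suitable $m\le m_r$. Carrying out this computation and verifying that the resulting exponent is nonnegative exactly under \eqref{Conditionq} is the missing core of the proof; as written, your proposal establishes the soft part (exact divergence, boundary values, compatibility) but not the uniform bound.
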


We are now in the position to state our main result, which generalizes \cite[Theorem 2.2]{LuPokorny21}:
\begin{theorem}\label{thm:main}
Let $(\Phi,\mathcal{R})=(\{z_i\},\{r_i\})$ and $D_\e$ be defined as in Section \ref{sec:domain}. Let $\vb f, \vb g\in L^\infty(\R^3)$, $M>0$, $L>0$ and $\vartheta_0\geq T_0>0$ in $D$ be defined such that it has finite $L^q$-norm over all smooth two-dimensional surfaces with finite surface area contained in $D$ for some $q>1$. Let $(\rho_\e,\vb u_\e,\vartheta_\e)$ be a sequence of renormalized weak entropy solutions to problem \eqref{eq:conteq}--\eqref{specificEnergy}, extended in a suitable way to the whole domain $D$ as shown in Section \ref{sec:extension} below. Let $\alpha>3$, $\gamma>2$, $m_\vartheta>2$ and $m_r>\max\{3/(\alpha-3),3\}$ satisfy the relation
\begin{align}\label{conditionAlpha}
\alpha-\frac{3}{m_r}>\max\left\lbrace\frac{2\gamma-3}{\gamma-2},\frac{3m_\vartheta-2}{m_\vartheta-2}\right\rbrace.
\end{align}

Then, there exists an almost surely positive random variable $\e_0(\omega)$ such that for all $0<\e\leq \e_0$ there hold the uniform bounds
\begin{align*}
\|\rho_\e\|_{L^{\gamma+\Theta}(D)}+\|\vb u_\e\|_{H_0^1(D)}+\|\vartheta_\e\|_{H^1(D)\cap L^{3m_\vartheta}(D)}\leq C,
\end{align*}
where $\Theta:=\min\{2\gamma-3,\gamma\frac{3m_\vartheta-2}{3m_\vartheta+2}\}$. Moreover, the corresponding weak limit as $\e\to 0$ is a renormalized weak solution to problem \eqref{eq:conteq}--\eqref{specificEnergy} in the limit domain $D$, i.e., $\rho\geq 0$ and $\vartheta>0$ a.e.~in $D$ and the equations \eqref{eq:weakContinuity}-\eqref{eq:weakEnergy} are fulfiled.
\end{theorem}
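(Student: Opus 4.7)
The plan is to adapt the scheme of \cite{LuPokorny21} for the periodic case, systematically replacing the periodic Bogovski\u{\i} estimate by the random version of Lemma \ref{MainBog} and using Lemma \ref{lem:MainProp} to control the geometry of the perforation. First, I would derive the basic energy bounds by testing the weak momentum equation \eqref{eq:weakMomentum} with $\vb u_\e$ and combining the outcome with the entropy inequality \eqref{eq:entropyInequ} and the total-energy balance \eqref{eq:weakEnergy} at $\psi\equiv 1$. The viscosity growth \eqref{eq:ViscosityCoeff} together with Korn's inequality controls $\|\nabla\vb u_\e\|_{L^2(D_\e)}$, while \eqref{HeatConductivity}, the boundary term $L\int_{\partial D_\e}\vartheta_0/\vartheta_\e$, and the trace estimate of Section \ref{sec:extension} yield $\|\vartheta_\e\|_{H^1(D_\e)\cap L^{3m_\vartheta}(D_\e)}\leq C$ (with the mass constraint \eqref{eq:fixedMass} preventing temperature degeneracy). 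The improved pressure bound is then obtained by testing \eqref{eq:weakMomentum} with $\phi_\e=\B_\e(\rho_\e^\Theta-(\rho_\e^\Theta)_{D_\e})$; Lemma \ref{MainBog} applied with $q$ tuned to the integrability of $\rho_\e^\Theta$ (one choice per branch of the minimum defining $\Theta$) is admissible precisely under \eqref{conditionAlpha}, and delivers $\|\rho_\e\|_{L^{\gamma+\Theta}(D_\e)}\leq C$.

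Next, I would extend $\vb u_\e$ and $\rho_\e$ by zero across the holes (the no-slip condition \eqref{bdryCond} keeps $\vb u_\e\in H_0^1(D)$) and extend $\vartheta_\e$ via the construction of Section \ref{sec:extension}. Weak/strong compactness then gives $\vb u_\e\rightharpoonup \vb u$ in $H_0^1(D)$, $\rho_\e\rightharpoonup\rho$ in $L^{\gamma+\Theta}(D)$, and $\vartheta_\e\to\vartheta$ strongly in every $L^p(D)$ with $p<3m_\vartheta$ by Rellich--Kondrachov. To pass to the limit in \eqref{eq:weakMomentum}, a test function $\phi\in C_c^1(D;\R^3)$ is lifted to an admissible $\phi_\e\in C_c^1(D_\e;\R^3)$ by cutting $\phi$ off on each safety ball $B_{\e^{1+\kappa}}(\e z_i)$ from Lemma \ref{lem:MainProp} and correcting the divergence with $\B_\e$; the resulting $\phi_\e\to\phi$ in $W^{1,q}$ for appropriate $q$, and the correction errors vanish since the total volume of the holes is $\lesssim\e^{3(\alpha-1)}$ with $\alpha>3$. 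The boundary integral on $\partial D_\e$ in \eqref{eq:weakEnergy} is handled via the trace estimate of Section \ref{sec:extension}, which forces the contribution on $\partial D_\e\setminus\partial D$ to vanish in the limit.

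The main obstacle is the strong convergence $\rho_\e\to\rho$, needed to identify the weak limit of $p(\rho_\e,\vartheta_\e)$ with $p(\rho,\vartheta)$. I would establish it through the effective viscous flux identity of Feireisl--Lions adapted to the perforated setting: testing \eqref{eq:weakMomentum} with a restriction-corrected version of $\nabla\Delta^{-1}[\mathbf{1}_D T_k(\rho_\e)]$ and running the standard div--curl argument, the pressure--divergence commutator identity survives the limit because the errors introduced by the restriction operator vanish thanks to $\alpha>3$ and the $\e$-uniformity in Lemma \ref{MainBog}. Combining this with the renormalized continuity equation \eqref{eq:renormalized} and the monotonicity of $\rho\mapsto a\rho^\gamma+c_v(\gamma-1)\rho\vartheta$ in $\rho$ shows that the oscillation defect measure of $\rho_\e$ vanishes, hence $\rho_\e\to\rho$ almost everywhere in $D$. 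All remaining nonlinearities then pass to the limit by Vitali's theorem, and $(\rho,\vb u,\vartheta)$ satisfies the target weak formulation of \eqref{eq:conteq}--\eqref{specificEnergy} on $D$.
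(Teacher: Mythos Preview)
Your proposal is correct and follows essentially the same route as the paper: entropy inequality and total-energy balance with $\psi\equiv 1$ for the first-round bounds, the Bogovski\u{\i} test function $\B_\e(\rho_\e^\Theta-\langle\rho_\e^\Theta\rangle)$ under \eqref{conditionAlpha} for the improved density estimate, the temperature extension and trace estimates of Section~\ref{sec:extension}, cut-off test functions (the paper's Lemma~\ref{lm:cutoff}) to pass to the limit in the momentum and energy equations, and the Feireisl--Lions effective viscous flux argument for strong convergence of $\rho_\e$. The one superfluous step in your outline is testing the momentum equation with $\vb u_\e$; the paper does not do this (and $\vb u_\e$ is not an admissible $C_c^1$ test function anyway), drawing the basic a priori bounds entirely from \eqref{eq:entropyInequ} and \eqref{eq:weakEnergy} at $\psi\equiv 1$.
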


\begin{rem}
Note that we do not know whether the energy inequality \eqref{eq:entropyInequ} is fulfiled in the limit. This fact is even not known for the case of constant radii and a periodic distribution of the holes, see \cite[Section 2.4]{LuPokorny21}.
\end{rem}

\begin{rem}\label{rem:renormalized}
Due to the DiPerna-Lions transport theory (see \cite{DiPernaLions}), for any smooth domain ${D\subset\R^3}$, any $r\in L^\beta(D)$ with $\beta\geq 2$ and any $\vb v\in H_0^1(D)$ such that
\begin{align*}
\div(r\vb v)=0\text{ in } \mathcal{D}^\prime(D),
\end{align*}
the couple $(r,\vb v)$, extended by zero outside $D$, satisfies the renormalized equation
\begin{align*}
\div((b(r)\vb v)+(rb^\prime(r)-b(r))\div\vb v=0 \text{ in }\mathcal{D}^\prime(\R^3),
\end{align*}
where $b\in C([0,\infty))\cap C^1((0,\infty))$ satisfies
\begin{align*}
b^\prime(s)\leq Cs^{-\lambda_0} \text{ for } s\in (0,1],\quad b^\prime(s)\leq Cs^{\lambda_1} \text{ for } s\in [1,\infty)
\end{align*}
with constants
\begin{align*}
C>0,\quad \lambda_0<1,\quad -1<\lambda_1\leq \frac\beta2-1.
\end{align*}
Thus, the renormalized continuity equation \eqref{eq:renormalized} is satisfied for any function $b$ satisfying the weaker assumptions $b\in C^1([0,\infty))$ and $b^\prime\in C_0([0,\infty))$.
\end{rem}

\section{Uniform bounds}\label{sect:ingr}
In this section, we give uniform bounds on the velocity and the density. We will always assume that the requirements on $\alpha,m_r,m_\vartheta,$ and $\gamma$ as made above hold and that the moment bound $m_r$ in \eqref{Conditionm} satisfies the additional assumption $m_r\geq 3$ in order to control the measure of the boundary $\d D_\e$ and the measure of $D_\e$ itself.

The entropy inequality \eqref{eq:entropyInequ} enables us to get several bounds on the sequence $(\rho_\e,\vb u_\e,\vartheta_\e)$ in $D_\e$. Before stating these bounds, we need the following form of the Strong Law of Large Numbers (see \cite[Lemma C.1]{GiuntiHoefer19}):
\begin{lemma}\label{lem:SLLN}
Let $d\geq 1$ and $(\Phi,\mathcal{R})=(\{z_j\},\{r_j\})$ be a marked Poisson point process on $\R^d\times\R_+$ with intensity $\lambda>0$. Assume that the marks $\{r_j\}$ are non-negative i.i.d.~random variables independent of $\Phi$ such that ${\E(r_j^{m_r}) <\infty}$ for some $m_r > 0$. Then, for every bounded set $S\subset\R^d$ which is star-shaped with respect to the origin, we have almost surely
\begin{align*}
\lim_{\e\to 0} \e^d N(\e^{-1} S)=\lambda |S|, \qquad \lim_{\e\to 0} \e^d \hspace{-0.3em} \sum_{z_j \in \eps^{-1} S} r_j^{m_r}=\lambda \E( r^{m_r})|S|.
\end{align*}
\end{lemma}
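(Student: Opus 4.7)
Both statements are strong laws of large numbers for the stationary random measures $\mu^k(A) := \sum_{z_j \in A} r_j^k$, with $k=0$ recovering the counting measure $N$ and $k=m_r$ the marked sum. Each $\mu^k$ is stationary and ergodic under $\R^d$-translations (the Poisson process is mixing and the i.i.d.\ marks, being independent of the locations, preserve ergodicity) and has finite intensity $\lambda\,\E[r^k]<\infty$. My plan is to (i) establish the limit along the geometric subsequence $\eps_n := q^n$ for a fixed $q\in(0,1)$ via a Chebyshev/Borel--Cantelli argument, (ii) interpolate from the subsequence to all $\eps$ using the star-shape of $S$, and (iii) remove the $q$-dependence by letting $q\nearrow 1$ along a countable sequence.

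For the counting statement, $N(\eps^{-1}S) \sim \mathrm{Poisson}(\lambda\eps^{-d}|S|)$ has variance equal to its mean, so
\[
\P\bigl(|\eps^d N(\eps^{-1}S)-\lambda|S||>\eta\bigr) \leq \frac{\lambda|S|\,\eps^d}{\eta^2},
\]
which is summable along $\eps_n=q^n$, and Borel--Cantelli yields a.s.\ convergence along the subsequence. For the marked sum, a direct variance argument would need $\E[r^{2m_r}]$, which is not assumed. I would either appeal directly to the Birkhoff/Wiener ergodic theorem for the stationary ergodic random measure $\mu^{m_r}$ (which requires only the $L^1$ bound $\E[r^{m_r}]<\infty$ on the Palm mark), or handle it by truncation: decompose $r_j^{m_r} = (r_j^{m_r}\wedge T) + (r_j^{m_r}-T)_+$. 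The truncated part has bounded jumps, so the Chebyshev/Borel--Cantelli argument applies and yields a.s.\ convergence along $\eps_n$ to $\lambda|S|\,\E[r^{m_r}\wedge T]$; the non-negative remainder has expectation $\lambda|S|\,\E[(r^{m_r}-T)_+]\to 0$ as $T\to\infty$, which is enough to control it after a diagonal argument over a countable sequence $T_\ell\to\infty$.

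For the continuous-parameter limit, if $\eps_{n+1}\leq\eps\leq\eps_n$ then the star-shape of $S$ with respect to the origin yields $\eps_n^{-1}S\subset\eps^{-1}S\subset\eps_{n+1}^{-1}S$, and monotonicity of $A\mapsto\mu^k(A)$ together with $q^d\eps_n^d \leq \eps^d\leq q^{-d}\eps_{n+1}^d$ gives the sandwich
\[
q^d\cdot\eps_n^d\,\mu^k(\eps_n^{-1}S) \;\leq\; \eps^d\,\mu^k(\eps^{-1}S) \;\leq\; q^{-d}\cdot\eps_{n+1}^d\,\mu^k(\eps_{n+1}^{-1}S).
\]
Sending $n\to\infty$ along the a.s.\ convergent subsequence traps both the $\liminf$ and $\limsup$ of the middle expression between $q^d\lambda|S|\E[r^k]$ and $q^{-d}\lambda|S|\E[r^k]$, and intersecting over the countable sequence $q=1-1/\ell\nearrow 1$ forces both to equal $\lambda|S|\E[r^k]$ on a set of full probability.

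The main obstacle is the marked limit under the weak moment bound $\E[r^{m_r}]<\infty$: without a second moment on $r^{m_r}$, direct $L^2$-concentration is unavailable, and one must either invoke an $L^1$ ergodic theorem for a stationary ergodic random measure or combine truncation with a careful diagonal argument to interchange the limits $\eps\to 0$ and $T\to\infty$ on a common full-measure set. The star-shape sandwich then serves as the clean mechanism that upgrades a.s.\ convergence on the countable grid $\{\eps_n\}$ to a.s.\ convergence along the continuous parameter $\eps\to 0$.
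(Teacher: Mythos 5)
Your overall scaffolding---the geometric grid $\e_n=q^n$, Chebyshev plus Borel--Cantelli, the star-shape sandwich $\e_n^{-1}S\subset\e^{-1}S\subset\e_{n+1}^{-1}S$, and the final $q\nearrow1$ step---is correct and settles the counting limit completely (for comparison: the paper gives no proof of this lemma, it quotes it from \cite[Lemma C.1]{GiuntiHoefer19}). The genuine gap is in the marked limit, in the only route you describe concretely, namely the fixed-level truncation. For the truncated part your variance bound decays like $\e_n^{d}$, so Borel--Cantelli applies; but for the remainder $R^T_\e:=\e^d\sum_{z_j\in\e^{-1}S}(r_j^{m_r}-T)_+$ the only input you use is $\E[R^T_\e]=\lambda|S|\,\E[(r^{m_r}-T)_+]$, and by exact scaling this expectation is the \emph{same constant at every scale}. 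Markov's inequality therefore gives a bound that is not summable in $n$, Borel--Cantelli is unavailable, and no diagonal argument over $T_\ell\to\infty$ can upgrade ``small expectation at each scale'' to an almost sure bound on $\limsup_{\e\to0}R^{T}_\e$. What your argument does yield is the lower half, $\liminf_{\e\to0}\e^d\sum_j r_j^{m_r}\ge\lambda|S|\,\E[r^{m_r}\wedge T]$ for every $T$ by monotonicity, and convergence in probability (or $L^1$) of the full sum; the upper half is exactly the hard direction of a first-moment strong law, and the remainder carries the same difficulty as the original sum, so the truncation has not reduced anything. The ergodic-theorem alternative you mention is plausible but not yet a proof as stated: spatial ergodic theorems for stationary random measures require the averaging sets to form a regular (e.g.\ convex or Tempel'man-type) family, and the dilates $\e^{-1}S$ of a general bounded star-shaped set need not satisfy this, so an additional reduction would be needed there as well.

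A clean elementary fix, in the spirit of the cited reference, uses the star-shape once more: the sets $\e^{-1}S$ increase as $\e$ decreases, so you may enumerate the points of $\Phi$ in the order in which they enter $\e^{-1}S$; this order is measurable with respect to $\Phi$ alone, hence the re-indexed marks $\tilde r_1,\tilde r_2,\dots$ are still i.i.d.\ with the law of $r$ and independent of $\Phi$. Writing
\begin{align*}
\e^d\sum_{z_j\in\e^{-1}S}r_j^{m_r}
=\e^d N(\e^{-1}S)\cdot\frac{1}{N(\e^{-1}S)}\sum_{j=1}^{N(\e^{-1}S)}\tilde r_j^{\,m_r},
\end{align*}
the first factor converges almost surely to $\lambda|S|$ by your counting argument, and the second converges almost surely to $\E[r^{m_r}]$ by the classical Kolmogorov--Etemadi strong law, which needs only $\E[r^{m_r}]<\infty$, evaluated along the random indices $N(\e^{-1}S)\to\infty$. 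Alternatively, a scale-dependent truncation (level growing with $n$, so that the exceptional probabilities become summable and the variance series converges) would close your argument, but the fixed-$T$ truncation as written does not.
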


\begin{rem}\label{rmk:SLLN} 
 Assuming the boundary of the set $S$ from the previous lemma is not too large, the same argument also shows
\begin{equation}\label{eq:ergodic}
\lim_{\e\to 0} \e^d N(\e^{-1} S)=\lambda |S|, \qquad \lim_{\e\to 0} \e^d \hspace{-0.3em} \sum_{z_j \in \Phi^{\eps}(S)} r_j^{m_r}=\lambda \E( r^{m_r})|S|.
\end{equation}
In particular, it is enough that $S$ has as $D$ a $C^2$-boundary.  
\end{rem}

Together with Lemma \ref{lem:SLLN}, we obtain for $\e>0$ small enough
\begin{align}\label{measures}
\begin{split}
|\d D_\e|&=|\d D|+\bigg|\bigcup_{z_i\in\Phi^\e(D)} \d B_{\e^\alpha r_i}(\e z_i)\bigg|\leq C+C\e^{2\alpha-3}\e^3\sum_{z_i\in\Phi^\e(D)} r_i^2\leq C,\\
|D\setminus D_\e|&=\bigg|\bigcup_{z_i\in\Phi^\e(D)} B_{\e^\alpha r_i}(\e z_i)\bigg|\leq C\e^{3(\alpha-1)}\e^3\sum_{z_i\in\Phi^\e(D)} r_i^3\leq C\e^{3(\alpha-1)},
\end{split}
\end{align}
which implies $|D_\e|\to |D|$ as $\e\to 0$ and thus $|D_\e|\leq 2\, |D|$ for $\e>0$ possibly even smaller. This yields for the entropy inequality \eqref{eq:entropyInequ} with $\psi\equiv 1$
\begin{align*}
\int_{D_\e}\frac{\mathbb{S}(\vartheta_\e,\nabla\vb u_\e):\vb u_\e}{\vartheta_\e}+\frac{(1+\vartheta_\e^{m_\vartheta})|\nabla\vartheta_\e|^2}{\vartheta_\e^2}+\int_{\d D_\e}\frac{L\vartheta_0}{\vartheta_\e}\leq C |\d D_\e|\leq C.
\end{align*}
If we take also $\psi\equiv 1$ in the weak formulation of the energy balance \eqref{eq:weakEnergy}, we obtain
\begin{align*}
L\int_{\d D_\e}\vartheta_\e\leq C\bigg(1+\int_{D_\e} (\rho_\e+1)|\vb u_\e|\bigg)\leq C\big(1+(\|\rho_\e\|_{L^\frac65(D_\e)}+1)\|\vb u_\e\|_{L^6(D_\e)}\big).
\end{align*}
Hence, due to the form of the stress tensor in \eqref{eq:StressTensor} and Korn's inequality, we have
\begin{align}\label{eq:firstBounds}
\begin{split}
&\|\vb u_\e\|_{H_0^1(D_\e)}+\|\nabla\log\vartheta_\e\|_{L^2(D_\e)}+\|\nabla|\vartheta_\e|^\frac{m_\vartheta}{2}\|_{L^2(D_\e)}+\|\vartheta^{-1}_\e\|_{L^1(\d D_\e)}\leq C,\\
&\|\vartheta_\e\|_{L^1(\d D_\e)}\leq C(1+\|\rho_\e\|_{L^\frac65(D_\e)}).
\end{split}
\end{align}

Note that the bounds in \eqref{eq:firstBounds} imply, by Sobolev inequality, that the norm $\|\vartheta_\e\|_{L^{3m_\vartheta}(D_\e)}$ is controlled by $\|\rho_\e\|_{L^\frac65(D_\e)}$. However, we do not know whether $\vartheta_\e$ is \emph{uniformly} bounded. To prove this, we need some additional tools. We will do this in the next section independent of the following results. For now, we will assume that $\vartheta_\e$ is uniformly bounded in $L^{3m_\vartheta}(D_\e)$ and prove this fact later on.

To get uniform bounds on the density, we will use Lemma \ref{MainBog} and proceed similar to \cite{BellaOschmann2021a,DieningFeireislLu,LuPokorny21}.

\begin{lemma}[see \cite{LuPokorny21}, Lemma 3.1]
Under the assumptions of Lemma \ref{MainBog}, assume additionally that $\|\vartheta_\e\|_{L^{3m_\vartheta}(D_\e)}$ is uniformly bounded. Then, for $\e>0$ small enough, we have
\begin{align*}
\|\rho_\e\|_{L^{\gamma+\Theta}(D_\e)}\leq C,
\end{align*}
where $C>0$ is independent of $\e$ and
\begin{align}\label{definTheta}
\Theta:=\min\left\lbrace2\gamma-3,\gamma\frac{3m_\vartheta-2}{3m_\vartheta+2}\right\rbrace.
\end{align}
\end{lemma}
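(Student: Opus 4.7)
The plan is to follow the standard Bogovski\u{\i}-test-function argument from compressible fluid mechanics, testing the weak momentum equation \eqref{eq:weakMomentum} with
$\phi_\e := \B_\e\bigl(\rho_\e^\Theta - \overline{\rho_\e^\Theta}\bigr)$,
where $\overline{\rho_\e^\Theta} := |D_\e|^{-1}\int_{D_\e}\rho_\e^\Theta$ is subtracted to enforce zero mean. By Lemma \ref{MainBog}, for an appropriate $q\in(1,3)$ compatible with \eqref{Conditionq} one has $\phi_\e\in W^{1,q}_0(D_\e)$ with $\div\phi_\e=\rho_\e^\Theta-\overline{\rho_\e^\Theta}$ and the $\e$-uniform bound $\|\nabla\phi_\e\|_{L^q(D_\e)}\leq C\,\|\rho_\e\|_{L^{q\Theta}(D_\e)}^\Theta$; if $q\Theta\leq\gamma+\Theta$, the right-hand side is in turn bounded by $\|\rho_\e\|_{L^{\gamma+\Theta}(D_\e)}^\Theta$. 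Substituting $\phi_\e$ into \eqref{eq:weakMomentum} and isolating the pressure law \eqref{constitutiveLaw} yields an identity of the form $a\int_{D_\e}\rho_\e^{\gamma+\Theta}+c_v(\gamma-1)\int_{D_\e}\rho_\e^{1+\Theta}\vartheta_\e = I_1+I_2+I_3+I_4$, where $I_1=\overline{\rho_\e^\Theta}\int_{D_\e}p(\rho_\e,\vartheta_\e)$ is the mean-value term, $I_2=-\int_{D_\e}(\rho_\e\vb u_\e\otimes\vb u_\e):\nabla\phi_\e$ is the convective term, $I_3=\int_{D_\e}\mathbb{S}(\vartheta_\e,\nabla\vb u_\e):\nabla\phi_\e$ is the viscous-stress term, and $I_4=-\int_{D_\e}(\rho_\e\vb f+\vb g)\cdot\phi_\e$ is the forcing term. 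Since the second term on the left is nonnegative, it suffices to bound each $I_j$ by $C(1+\|\rho_\e\|_{L^{\gamma+\Theta}(D_\e)}^{\sigma_j})$ with $\sigma_j<\gamma+\Theta$ and then absorb via Young's inequality.

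For $I_2$ I would apply H\"older with exponents $(\gamma+\Theta,3,q)$ to the triple $(\rho_\e,|\vb u_\e|^2,|\nabla\phi_\e|)$, using Sobolev $H^1_0(D_\e)\hookrightarrow L^6(D_\e)$ and the energy bound \eqref{eq:firstBounds} on $\vb u_\e$; matching $q\Theta\leq\gamma+\Theta$ with the resulting $q=3(\gamma+\Theta)/(2(\gamma+\Theta)-3)$ forces the constraint $\Theta\leq 2\gamma-3$. For $I_3$, the growth $|\mathbb{S}|\lesssim(1+\vartheta_\e)|\nabla\vb u_\e|$ from \eqref{eq:ViscosityCoeff}, together with H\"older with exponents $(3m_\vartheta,2,q)$, the standing hypothesis on $\|\vartheta_\e\|_{L^{3m_\vartheta}(D_\e)}$, and \eqref{eq:firstBounds}, give $q=6m_\vartheta/(3m_\vartheta-2)$, and matching $q\Theta\leq\gamma+\Theta$ produces the second constraint $\Theta\leq\gamma(3m_\vartheta-2)/(3m_\vartheta+2)$. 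These two constraints together yield exactly the definition \eqref{definTheta} of $\Theta$. The mean-value term $I_1$ is handled by H\"older bounds $\overline{\rho_\e^\Theta}\lesssim\|\rho_\e\|_{L^{\gamma+\Theta}(D_\e)}^\Theta$ and $\int\rho_\e^\gamma\lesssim\|\rho_\e\|_{L^{\gamma+\Theta}(D_\e)}^\gamma$, combined with convex interpolation against the fixed mass \eqref{eq:fixedMass} to strictly reduce the resulting exponent below $\gamma+\Theta$; the thermal piece $\int\rho_\e\vartheta_\e$ is controlled by H\"older using the hypothesis on $\vartheta_\e$ and the same mass interpolation. The forcing $I_4$ is a direct Sobolev estimate together with $\vb f,\vb g\in L^\infty$.

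The main obstacle is the careful exponent bookkeeping: one must verify that each value of $q$ appearing above lies in $(1,3)$ and satisfies \eqref{Conditionq} of Lemma \ref{MainBog}, which is precisely what the strengthened condition \eqref{conditionAlpha} on $\alpha$ and $m_r$ is designed to encode, so that the Bogovski\u{\i} estimate is available with a constant independent of $\e$. A secondary subtlety is the treatment of $I_1$: a naive H\"older bound produces exponent exactly $\gamma+\Theta$, matching the left-hand side, so the fixed-mass constraint \eqref{eq:fixedMass} must be invoked as an interpolation endpoint to obtain a strict loss. Once this is done, Young's inequality absorbs all four error terms into the dominant pressure integral $a\int_{D_\e}\rho_\e^{\gamma+\Theta}$, yielding the desired $\e$-uniform bound.
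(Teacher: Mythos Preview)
Your proposal is correct and follows essentially the same route as the paper: both test \eqref{eq:weakMomentum} with $\B_\e(\rho_\e^\Theta-\langle\rho_\e^\Theta\rangle)$, decompose the right-hand side into the same four terms, and derive the two constraints on $\Theta$ from the convective and viscous terms via H\"older with the exact same exponents $q_1=3(\gamma+\Theta)/(2(\gamma+\Theta)-3)$ and $q_2=6m_\vartheta/(3m_\vartheta-2)$. You have also correctly flagged the two subtle points the paper handles explicitly, namely that each $q$ must satisfy \eqref{Conditionq} (which is precisely the content of \eqref{conditionAlpha}) and that the mean-value term $I_1$ requires interpolation against the fixed mass \eqref{eq:fixedMass} to push the exponent strictly below $\gamma+\Theta$.
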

\begin{proof}
In the weak formulation of the momentum balance \eqref{eq:weakMomentum}, we will use the test function
\begin{align*}
\phi:=\B_\e\big(\rho_\e^\Theta-\langle\rho_\e^\Theta\rangle\big),\quad \langle\rho_\e^\Theta\rangle:=\frac{1}{|D_\e|}\int_{D_\e}\rho_\e^\Theta,
\end{align*}
where $\B_\e$ is the operator from Lemma \ref{MainBog}, and $\Theta$ to be determined. We then have for any $1<q<3$ satisfying \eqref{Conditionq}
\begin{align*}
\|\nabla\phi\|_{L^q(D_\e)}\leq C(q)\,\|\rho_\e^\Theta\|_{L^q(D_\e)}.
\end{align*}
Using $\phi$ as test function in \eqref{eq:weakMomentum}, we get
\begin{align*}
\int_{D_\e} p(\rho_\e,\vartheta_\e)\rho_\e^\Theta=\int_{D_\e}p(\vartheta_\e,\rho_\e)\langle\rho_\e^\Theta\rangle+\mathbb{S}(\vartheta_\e,\nabla \vb u_\e):\nabla\phi-(\rho_\e \vb u_\e\otimes \vb u_\e):\nabla\phi-(\rho_\e \vb f+\vb g)\cdot\phi.
\end{align*}
We will estimate the right hand-side term by term and start with the most restrictive terms, which will give bounds on $\Theta$. First, we take the convective term to estimate
\begin{align*}
\int_{D_\e} \big|(\rho_\e\vb u_\e\otimes\vb u_\e):\nabla\phi\big| &\leq \|\vb u_\e\|_{L^6(D_\e)}^2\|\rho_\e\|_{L^{\gamma+\Theta}(D_\e)}\|\nabla\phi\|_{L^{q_1}(D_\e)}\\
&\leq C(q_1)\,\|\vb u_\e\|_{L^6(D_\e)}^2\|\rho_\e\|_{L^{\gamma+\Theta}(D_\e)}\|\rho_\e^\Theta\|_{L^{q_1}(D_\e)}\\
&=C(q_1)\,\|\vb u_\e\|_{L^6(D_\e)}^2\|\rho_\e\|_{L^{\gamma+\Theta}(D_\e)}\|\rho_\e\|_{L^{q_1\Theta}(D_\e)}^\Theta,
\end{align*}
where $q_1$ is determined by
\begin{align*}
\frac{1}{q_1}=1-\frac26-\frac{1}{\gamma+\Theta}.
\end{align*}
In order to get as high integrability of $\rho_\e$ as possible, we choose $\Theta$ such that $q_1\Theta=\gamma+\Theta$. This together with $\gamma>2$ leads to
\begin{align*}
\Theta=\Theta_1:=2\gamma-3>1,\quad q_1=\frac{3(\gamma-1)}{2\gamma-3}\in (\frac32,3).
\end{align*}
Using Sobolev embedding and the uniform bound on $\vb u_\e$ from \eqref{eq:firstBounds} to obtain $\|\vb u_\e\|_{L^6(D_\e)}\leq C\, \|\vb u_\e\|_{H_0^1(D_\e)}\leq C$, we deduce
\begin{align*}
\int_{D_\e} \big|(\rho_\e\vb u_\e\otimes\vb u_\e):\nabla\phi\big|\leq C\|\rho_\e\|_{L^{\gamma+\Theta_1}(D_\e)}^{1+\Theta_1},
\end{align*}
where $C>0$ is independent of $\e$ and $1+\Theta_1<\gamma+\Theta_1$.

Second, we consider the diffusive term to obtain
\begin{align*}
\int_{D_\e} \big|\mathbb{S}(\vartheta_\e,\nabla \vb u_\e):\nabla\phi\big| &\leq C\,(1+\|\vartheta_\e\|_{L^{3m_\vartheta}(D_\e)})\|\nabla\vb u_\e\|_{L^2(D_\e)}\|\nabla\phi\|_{L^{q_2}(D_\e)}\\
&\leq C(q_2)\,\|\nabla\vb u_\e\|_{L^2(D_\e)}\|\rho_\e^\Theta\|_{L^{q_2}(D_\e)}\\
&=C(q_2)\,\|\nabla\vb u_\e\|_{L^2(D_\e)}\|\rho_\e\|^\Theta_{L^{q_2\Theta}(D_\e)},
\end{align*}
where we set (recall $m_\vartheta>2$)
\begin{align*}
q_2:=\frac{6m_\vartheta}{3m_\vartheta-2}\in (2,3).
\end{align*}
As before, we choose $\Theta$ such that $q_2\Theta=\gamma+\Theta$, which leads to
\begin{align*}
\Theta=\Theta_2:=\gamma\frac{3m_\vartheta-2}{3m_\vartheta+2}>1.
\end{align*}
This yields
\begin{align*}
\int_{D_\e} \big|\mathbb{S}(\vartheta_\e,\nabla \vb u_\e):\nabla\phi\big|\leq C \|\rho_\e\|_{L^{\gamma+\Theta}(D_\e)}^\Theta.
\end{align*}
In particular, if we set
\begin{align*}
\Theta:=\min\{\Theta_1,\Theta_2\}>1,\quad \alpha-\frac{3}{m_r}>\max\left\lbrace\frac{2\gamma-3}{\gamma-2},\frac{3m_\vartheta-2}{m_\vartheta-2}\right\rbrace>3,
\end{align*}
then $q_1$ and $q_2$ satisfy \eqref{Conditionq} and we infer
\begin{align*}
\int_{D_\e} \big|(\rho_\e\vb u_\e\otimes\vb u_\e):\nabla\phi\big|+\int_{D_\e} \big|\mathbb{S}(\vartheta_\e,\nabla \vb u_\e):\nabla\phi\big|\leq C\big(1+\|\rho_\e\|_{L^{\gamma+\Theta}(D_\e)}^{1+\Theta}\big).
\end{align*}

Since $m_\vartheta>2$, we have $\Theta\leq \gamma\frac{3m_\vartheta-2}{3m_\vartheta+2}<\gamma$, yielding $2 \Theta<\gamma+\Theta$. Thus we infer
\begin{align*}
\int_{D_\e} \big|(\rho_\e\vb f+\vb g)\cdot\phi\big|&\leq C\,(\|\rho_\e\|_{L^2(D_\e)}+1)\|\phi\|_{L^2(D_\e)}\\
&\leq C(2)\,(\|\rho_\e\|_{L^{\gamma+\Theta}(D_\e)}+1)\|\rho_\e\|_{L^{2\Theta}(D_\e)}^\Theta\\
&\leq C(2)\,(\|\rho_\e\|_{L^{\gamma+\Theta}(D_\e)}+1)\|\rho_\e\|_{L^{\gamma+\Theta}(D_\e)}^\Theta\\
&\leq C(2)\, (\|\rho_\e\|_{L^{\gamma+\Theta}(D_\e)}^{1+\Theta}+1),
\end{align*}
where in the last inequality we used
\begin{align}\label{eq:Young}
ab^\frac1p\leq b+a^{p^\prime}\quad \forall a,b\geq 0,\, \frac{1}{p}+\frac{1}{p^\prime}=1,
\end{align}
which is a consequence of Young's inequality, for $b=\|\rho_\e\|_{L^{\gamma+\Theta}(D_\e)}^{1+\Theta}$ and $p=(1+\Theta)/\Theta$.

Farther, the estimate for the pressure reads
\begin{align*}
\int_{D_\e}\big|p(\vartheta_\e,\rho_\e)\langle\rho_\e^\Theta\rangle\big|&\leq C\int_{D_\e} (\rho_\e^\gamma+\rho_\e\vartheta_\e)\langle\rho_\e^\Theta\rangle\\
&\leq C \bigg(\|\rho_\e\|_{L^\gamma(D_\e)}^\gamma+\|\rho_\e\|_{L^\frac{6}{5}(D_\e)}\|\vartheta_\e\|_{L^6(D_\e)}\bigg)\|\rho_\e\|_{L^\Theta(D_\e)}^\Theta\\
&\leq C \bigg(\|\rho_\e\|_{L^\gamma(D_\e)}^\gamma+\|\rho_\e\|_{L^\gamma(D_\e)}\bigg)\|\rho_\e\|_{L^\Theta(D_\e)}^\Theta.
\end{align*}
Here we assumed that $\vartheta_\e$ is bounded in $L^{3 m_\vartheta}(D_\e)\subset L^6(D_\e)$. Using \eqref{eq:Young} for $b=\|\rho_\e\|_{L^\gamma(D_\e)}^\gamma$ and $p=\gamma$, together with $\Theta<\gamma$, which implies $\|\rho_\e\|_{L^\Theta(D_\e)}^\Theta\leq 1+\|\rho_\e\|_{L^\gamma(D_\e)}^\gamma\|\rho_\e\|_{L^\Theta(D_\e)}^\Theta$, interpolation between the norms of $L^1(D_\e)$ and $L^{\gamma+\Theta}(D_\e)$ and that we control the $L^1$-norm of $\rho_\e$ (i.e., the total mass), we end up with
\begin{align*}
\int_{D_\e}\big|p(\vartheta_\e,\rho_\e)\langle\rho_\e^\Theta\rangle\big|&\leq C \bigg(1+\|\rho_\e\|_{L^\gamma(D_\e)}^\gamma\bigg)\|\rho_\e\|_{L^\Theta(D_\e)}^\Theta\\
&\leq C \bigg(1+\|\rho_\e\|_{L^\gamma(D_\e)}^\gamma\|\rho_\e\|_{L^\Theta(D_\e)}^\Theta\bigg)\\
&\leq C\bigg(1+\|\rho_\e\|_{L^{\gamma+\Theta}(D_\e)}^\lambda\bigg)
\end{align*}
for some $\lambda<\gamma+\Theta$.

Finally, we obtain
\begin{align*}
\|\rho_\e\|_{L^{\gamma+\Theta}(D_\e)}^{\gamma+\Theta}\leq C\bigg(1+\|\rho_\e\|_{L^{\gamma+\Theta}(D_\e)}^\lambda\bigg)\quad\text{ for some } 1<\lambda<\gamma+\Theta,
\end{align*}
which yields the uniform bound on $\rho_\e$ in $L^{\gamma+\Theta}(D_\e)$, provided $\vartheta_\e$ is uniformly bounded in $L^{3m_\vartheta}(D_\e)$.

\end{proof}

Combining the uniform estimates on $\rho_\e$ with these from \eqref{eq:firstBounds}, we obtain
\begin{gather*}
\|\vb u_\e\|_{H_0^1(D_\e)}+\|\rho_\e\|_{L^{\gamma+\Theta}(D_\e)}+\|\nabla\log\vartheta_\e\|_{L^2(D_\e)}+\|\nabla|\vartheta_\e|^\frac{m_\vartheta}{2}\|_{L^2(D_\e)}\leq C,\\
\|\vartheta_\e\|_{L^1(\d D_\e)}+\|\vartheta^{-1}_\e\|_{L^1(\d D_\e)}\leq C.
\end{gather*}
Note that these bounds are obtained by using the assumption that $\vartheta_\e$ is uniformly bounded in $L^{3m_\vartheta}(D_\e)$. This assumption will be proven in the next section.

\section{Extension of functions}\label{sec:extension}
In order to work in the fixed domain $D$ instead of the variable domain $D_\e$, we can extend the functions $\vb u_\e$ and $\rho_\e$ simply by zero, which will preserve their regularity and their norms. In particular, the extended functions are still uniformly bounded.

However, the extension of the temperature is more delicate since an extension by zero will in general not preserve its regularity. Since this extension was previously done in \cite[Section 4]{LuPokorny21}, we will not repeat the full arguments of the proofs. First recall that, by Lemma \ref{lem:MainProp} and for $\e>0$ small enough, the balls $\{B_{2\e^\alpha r_i}(\e z_i)\}_{z_i\in\Phi^\e(D)}$ are disjoint. The first lemma we need thus follows by a trivial modification of the proof from \cite[Lemma 4.1]{LuPokorny21}:
\begin{lemma}
Let $D_\e$ be defined as in \eqref{def:Domain} and let the assumptions of Lemma \ref{lem:MainProp} hold. Then there is an almost surely positive random variable $\e_0(\omega)$ such that for all $0<\e\leq\e_0$ there exists an extension operator $E_\e:H^1(D_\e)\to H^1(D)$ such that for any $\phi\in H^1(D_\e)$ and any $z_i\in\Phi^\e(D)$,
\begin{align*}
&E_\e\phi=\phi\text{ in } D_\e,\\
&\|\nabla E_\e\phi\|_{L^2(B_{\e^\alpha r_i}(\e z_i))}\leq C\|\nabla\phi\|_{L^2(B_{2\e^\alpha r_i}(\e z_i)\setminus B_{\e^\alpha r_i}(\e z_i))}
\end{align*}
and hence $\|\nabla E_\e\phi\|_{L^2(D)}\leq C\|\nabla\phi\|_{L^2(D_\e)}$. Farther, for  any $1\leq q\leq\infty$,
\begin{align*}
\| E_\e\phi\|_{L^q(B_{\e^\alpha r_i}(\e z_i))}\leq C\|\phi\|_{L^q(B_{2\e^\alpha r_i}(\e z_i)\setminus B_{e^\alpha r_i}(\e z_i))},
\end{align*}
where the constant $C>0$ is independent of $\e$ and $i$. Furthermore, there exists an operator ${\tilde{E}_\e:H^1_{\geq 0}(D_\e)\to H^1_{\geq 0}(D)}$ with the same properties as above. Here $H^1_{\geq 0}$ denotes the Sobolev space of all non-negative functions in $H^1$. In particular, one may choose $\tilde{E}_\e\phi:=\max\{0,E_\e\phi\}$.
\end{lemma}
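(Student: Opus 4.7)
The strategy is to reduce the problem to a single reference extension on the unit annulus and then rescale it onto each hole, exploiting the pairwise disjointness of the enlarged holes provided by Lemma~\ref{lem:MainProp}; this is exactly the argument of \cite[Lemma~4.1]{LuPokorny21} in the periodic case, with the random disjointness statement playing the role of its periodic counterpart. Concretely, I would first invoke Lemma~\ref{lem:MainProp} with $\tau = 2$ and some admissible $\kappa > 1$ (admissibility is guaranteed by $\alpha > 3$ and $m_r > 3/(\alpha-3)$, which yield $\alpha - 1 - 3/m_r > 1$). For every $\e \leq \e_0(\omega)$ one then has $2\e^\alpha r_i \leq \e^{1+\kappa} < \e$ for all $z_i \in \Phi^\e(D)$, and the balls $B_{2\e^{1+\kappa}}(\e z_i)$ are pairwise disjoint. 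In particular the doubled holes $B_{2\e^\alpha r_i}(\e z_i)$ are pairwise disjoint, each is contained in $D$ by the constraint $\dist(\e z_i,\partial D) > \e$ built into the definition of $\Phi^\e(D)$, and the annular shell $A_i := B_{2\e^\alpha r_i}(\e z_i) \setminus B_{\e^\alpha r_i}(\e z_i)$ sits inside $D_\e$.

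Next, on the fixed reference annulus $B_2 \setminus B_1 \subset \R^3$ a classical construction (for instance, the radial reflection $x \mapsto x/|x|^2$ composed with a smooth radial cutoff, or the Stein extension for this Lipschitz annular domain) provides a bounded linear operator $E : H^1(B_2 \setminus B_1) \to H^1(B_2)$ satisfying $E\phi = \phi$ on $B_2 \setminus B_1$, together with $\|\nabla E\phi\|_{L^2(B_1)} \leq C \|\nabla \phi\|_{L^2(B_2\setminus B_1)}$ and $\|E\phi\|_{L^q(B_1)} \leq C\|\phi\|_{L^q(B_2\setminus B_1)}$ for every $q \in [1,\infty]$, with a single universal constant $C$. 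I would then set $E_\e \phi := \phi$ on $D_\e$ and, for each $z_i$ with $r_i > 0$, introduce the affine scaling $T_i(x) := \e z_i + \e^\alpha r_i x$ and define $(E_\e \phi)(T_i(x)) := E(\phi \circ T_i)(x)$ for $x \in B_1$; the case $r_i = 0$ is trivial. Since both the $L^2$-gradient seminorm and the $L^q$-norm transform homogeneously under $T_i$, the reference inequalities translate into the per-ball estimates in the statement with the same constant $C$, independently of $i$ and $r_i$. The gluing is $H^1$-conformal because $E(\phi \circ T_i)$ equals $\phi \circ T_i$ on $B_2 \setminus B_1$, so its trace on $\partial B_1$ agrees with the trace of $\phi$ coming from $D_\e$, producing no distributional jump of $\nabla E_\e \phi$ across $\partial B_{\e^\alpha r_i}(\e z_i)$. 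Summing the per-ball gradient bounds over the pairwise disjoint shells $\{A_i\} \subset D_\e$ then yields $\|\nabla E_\e \phi\|_{L^2(D)}^2 \leq \|\nabla \phi\|_{L^2(D_\e)}^2 + C^2 \sum_i \|\nabla \phi\|_{L^2(A_i)}^2 \leq (1 + C^2)\|\nabla \phi\|_{L^2(D_\e)}^2$. For the nonnegative variant one takes $\tilde E_\e \phi := \max\{0, E_\e \phi\}$; this equals $\phi \geq 0$ on $D_\e$, and by Stampacchia's lemma $\nabla (E_\e \phi)^+ = \mathbf{1}_{\{E_\e\phi > 0\}} \nabla E_\e\phi$ pointwise a.e., so both the $H^1$- and $L^q$-bounds pass unchanged to $\tilde E_\e$.

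The only genuinely delicate point, and hence the main obstacle, is ensuring that the constant in the per-ball extension estimates is independent of the random radius $r_i$. This is handled by the scale-invariance of the annular geometry: every shell $A_i$ is, up to the affine rescaling $T_i$, the fixed reference annulus $B_2 \setminus B_1$, so the reference extension needs to be constructed only once. The random disjointness from Lemma~\ref{lem:MainProp} is then precisely what is needed to sum the local $L^2$-bounds across the family of holes without overlap losses.
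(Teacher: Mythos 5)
Your argument is correct and is essentially the proof the paper relies on (it defers to Lu--Pokorn\'y's Lemma 4.1): the only new ingredient in the random setting is the disjointness of the doubled balls $B_{2\e^\alpha r_i}(\e z_i)$ supplied by Lemma~\ref{lem:MainProp}, and you use it exactly as intended, combined with a scale-invariant reference extension on the unit annulus rescaled to each hole. One precision: neither a generic Stein extension nor ``reflection times a cutoff vanishing in $B_{1/2}$'' yields the pure gradient bound $\|\nabla E\phi\|_{L^2(B_1)}\leq C\|\nabla\phi\|_{L^2(B_2\setminus B_1)}$ (the cutoff term contributes $\|\phi\|_{L^2}$, which after rescaling produces a factor $(\e^\alpha r_i)^{-1}$ and destroys uniformity); the standard fix is to let the cutoff interpolate between the reflected values and the mean $\langle\phi\rangle$ over the annulus and invoke Poincar\'e--Wirtinger there, which gives the pure gradient estimate while keeping the $L^q$ bounds for all $1\leq q\leq\infty$, i.e.\ exactly the scale-invariant properties your rescaling step needs.
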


With the help of the extension operator $\tilde{E}_\e$, we can bound the temperature uniformly w.r.t.~$\e$:
\begin{lemma}
For $\e>0$ small enough, we have $\|\tilde{E}_\e\vartheta_\e\|_{H^1(D)}+\|\tilde{E}_\e\vartheta_\e\|_{L^{3m_\vartheta}(D)}\leq C$ for some $C>0$ independent of $\e$. In particular, we have $\|\vartheta_\e\|_{H^1(D_\e)}+\|\vartheta_\e\|_{L^{3m_\vartheta}(D_\e)}\leq C$ uniformly in $\e$.
\end{lemma}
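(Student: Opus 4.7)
The plan is to first establish the $L^{3m_\vartheta}(D)$ bound for the extended temperature via a Sobolev argument applied to the extension $w_\e := \tilde{E}_\e(\vartheta_\e^{m_\vartheta/2}) \in H^1_{\geq 0}(D)$, and then upgrade to the full $H^1(D)$ bound by a H\"older interpolation of the entropy estimates. From the gradient-preservation of $\tilde{E}_\e$ and the bound $\|\nabla \vartheta_\e^{m_\vartheta/2}\|_{L^2(D_\e)} \leq C$ in \eqref{eq:firstBounds}, we immediately get $\|\nabla w_\e\|_{L^2(D)} \leq C$, so by Sobolev embedding $H^1(D) \hookrightarrow L^6(D)$ it suffices to bound $\|w_\e\|_{L^2(D)}$ uniformly in $\e$.

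For this, I would invoke a Poincar\'e--trace inequality of the form
\[
\|w_\e\|_{L^2(D)} \leq C\bigl(\|\nabla w_\e\|_{L^2(D)} + \|w_\e\|_{L^1(\d D)}\bigr),
\]
and bound $\|w_\e\|_{L^1(\d D)} = \int_{\d D}\vartheta_\e^{m_\vartheta/2}$ by combining the information $\int_{\d D_\e}\vartheta_\e^{-1}\leq C$ (which via Jensen's inequality yields a uniform lower bound on the mean of $\log\vartheta_\e$ on $\d D\subset\d D_\e$) with the gradient bound on the extension $E_\e\log\vartheta_\e\in H^1(D)$. Once $\|w_\e\|_{L^2(D)}\leq C$ is established, Sobolev yields $\|w_\e\|_{L^6(D)}\leq C$, i.e.\ $\|\vartheta_\e\|_{L^{3m_\vartheta}(D_\e)}\leq C$, and the $L^p$-property of $\tilde{E}_\e$ from the previous lemma transfers this to $\|\tilde{E}_\e\vartheta_\e\|_{L^{3m_\vartheta}(D)}\leq C$.

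For the $H^1(D)$ bound on $\tilde{E}_\e\vartheta_\e$, I would first establish $\|\nabla\vartheta_\e\|_{L^2(D_\e)}\leq C$ via the H\"older interpolation
\[
\int_{D_\e}|\nabla\vartheta_\e|^2 \leq \biggl(\int_{D_\e}\frac{|\nabla\vartheta_\e|^2}{\vartheta_\e^2}\biggr)^{\!(m_\vartheta-2)/m_\vartheta}\biggl(\int_{D_\e}\vartheta_\e^{m_\vartheta-2}|\nabla\vartheta_\e|^2\biggr)^{\!2/m_\vartheta},
\]
obtained by writing $|\nabla\vartheta_\e|^2$ as the geometric mean of $|\nabla\vartheta_\e|^2/\vartheta_\e^2$ and $\vartheta_\e^{m_\vartheta-2}|\nabla\vartheta_\e|^2$ with weights $\theta=(m_\vartheta-2)/m_\vartheta$ and $1-\theta=2/m_\vartheta$ chosen so that the $\vartheta_\e$-factor drops out; both integrals on the right are bounded by the entropy inequality. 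The gradient-preservation of $\tilde{E}_\e$ then gives $\|\nabla\tilde{E}_\e\vartheta_\e\|_{L^2(D)}\leq C$, and combined with the $L^{3m_\vartheta}(D)$-bound (which implies an $L^2(D)$-bound since $|D|<\infty$) one concludes $\|\tilde{E}_\e\vartheta_\e\|_{H^1(D)}\leq C$.

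The hardest step will be the uniform bound on the boundary trace $\int_{\d D}\vartheta_\e^{m_\vartheta/2}$; once this is in hand, the remaining steps (extension, Sobolev, interpolation) are rather standard. In particular, one must carry out this trace estimate using only the entropy inequality and the control $\int_{\d D_\e}\vartheta_\e^{-1}\leq C$, since invoking the density estimate of Section~\ref{sect:ingr} would produce a circular dependence on the very temperature bound we are trying to prove.
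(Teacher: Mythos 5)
There is a genuine gap, and it sits exactly at the step you yourself flag as the hardest one: the uniform bound on the boundary trace $\int_{\partial D}\vartheta_\e^{m_\vartheta/2}$ \emph{using only the entropy inequality} cannot work, because the entropy-derived information goes in the wrong direction. From $\int_{\partial D_\e}\vartheta_\e^{-1}\leq C$ Jensen's inequality gives a \emph{lower} bound on the mean of $\log\vartheta_\e$ over $\partial D$, i.e.\ it prevents the temperature from being uniformly small, not from being uniformly large; combining it with the gradient bounds on $\log\vartheta_\e$ or $\vartheta_\e^{m_\vartheta/2}$ can therefore never control $\int_{\partial D}\vartheta_\e^{m_\vartheta/2}$ from above. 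Concretely, $\vartheta_\e\equiv N$ with $N$ arbitrarily large is compatible with every quantity appearing in \eqref{eq:entropyInequ} (all gradients vanish and $\int_{\partial D_\e}\vartheta_\e^{-1}$ is tiny), yet makes your trace integral blow up. The upper bound on the temperature must come from the total energy balance \eqref{eq:weakEnergy} with $\psi\equiv 1$, whose right-hand side contains $\int_{D_\e}\rho_\e\vb f\cdot\vb u_\e$; this is precisely the second line of \eqref{eq:firstBounds}, $\|\vartheta_\e\|_{L^1(\partial D_\e)}\leq C(1+\|\rho_\e\|_{L^{6/5}(D_\e)})$, and there is no way to avoid the density here. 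The rest of your outline (the extension of the gradient, the Poincar\'e inequality with a boundary term, and the H\"older interpolation giving $\|\nabla\vartheta_\e\|_{L^2(D_\e)}\leq C$ from the two entropy terms) is fine, but it all hinges on the trace bound you cannot produce.

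The circularity you are trying to dodge is not actually vicious, and the proof the paper relies on (Lu--Pokorn\'y, Section 4) resolves it by \emph{closing the coupled estimates} rather than decoupling them. One applies to the extended temperature a Poincar\'e-type inequality of the form $\|\vartheta_\e^{m_\vartheta/2}\|_{H^1}^2\leq C\bigl(\|\nabla\vartheta_\e^{m_\vartheta/2}\|_{L^2}^2+\|\vartheta_\e\|_{L^1(\partial D)}^{m_\vartheta}\bigr)$, so that only the $L^1$ trace of $\vartheta_\e$ (not of $\vartheta_\e^{m_\vartheta/2}$) is needed; this yields $\|\vartheta_\e\|_{L^{3m_\vartheta}}\lesssim 1+\|\rho_\e\|_{L^{6/5}}$. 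Interpolating $\|\rho_\e\|_{L^{6/5}}\leq M^{1-\theta}\|\rho_\e\|_{L^{\gamma+\Theta}}^{\theta}$ with the fixed mass \eqref{eq:fixedMass} and $\theta<1$, and inserting this into the Bogovski\u{\i} density estimate of Section~\ref{sect:ingr} (where $\|\vartheta_\e\|_{L^{3m_\vartheta}}$ enters with power one), one obtains a system of algebraic inequalities in which every right-hand side power of $\|\rho_\e\|_{L^{\gamma+\Theta}}$ is strictly below $\gamma+\Theta$; hence both bounds close simultaneously. So the fix is not to avoid the density estimate but to run it together with the temperature estimate and check the exponents, which is what the cited argument does.
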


We further need to estimate the trace of $\vartheta_\e$ in $\d D_\e$. Indeed, for fixed $\e>0$, the trace of $\vartheta_\e$ belongs to $L^{2m_\vartheta}(\d D_\e)$. The next lemma enables us to control its norm in a quantitative way:
\begin{lemma}\label{lem:traceSingleBall}
Under the assumptions of Theorem \ref{thm:main}, we have for any $z_i\in\Phi^\e(D)$ and for $\e>0$ small enough
\begin{align*}
\|\vartheta_\e\|_{L^{2m_\vartheta}(\d B_i)}^{2m_\vartheta}\leq C\bigg(\|\nabla|\vartheta|^\frac{m_\vartheta}{2}\|_{L^2(2B_i\setminus B_i)}^2+\|\vartheta_\e\|_{L^{3m_\vartheta}(2B_i\setminus B_i)}^{3m_\vartheta}+\|\vartheta_\e\|_{L^{3m_\vartheta}(2B_i\setminus B_i)}^{2m_\vartheta}\bigg),
\end{align*}
where we set $B_i:=B_{\e^\alpha r_i}(\e z_i)$ and $2B_i:=B_{2\e^\alpha r_i}(\e z_i)$.
\end{lemma}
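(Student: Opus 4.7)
The plan is to prove the estimate directly in the original (unrescaled) variables by a one‑dimensional fundamental‑theorem‑of‑calculus argument along radial rays, rather than first rescaling $A_i:=2B_i\setminus B_i$ to a unit annulus. Writing $\rho_i:=\e^\alpha r_i$ and parametrizing $A_i$ by $x=\e z_i+s\omega$ with $s\in[\rho_i,2\rho_i]$ and $\omega\in S^2$, I would start from the identity
\[
\vartheta_\e^{2m_\vartheta}(\e z_i+\rho_i\omega)=\vartheta_\e^{2m_\vartheta}(\e z_i+r\omega)-2m_\vartheta\int_{\rho_i}^{r}\vartheta_\e^{2m_\vartheta-1}\,\partial_s\vartheta_\e\,ds
\]
valid for each $r\in[\rho_i,2\rho_i]$, then average in $r$ over $[\rho_i,2\rho_i]$, multiply by the surface element $\rho_i^2$, and integrate over $\omega\in S^2$. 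Using $\rho_i\le r\le 2\rho_i$ to convert $dr\,d\omega$ back into the volume element $r^2\,dr\,d\omega$ up to a factor of at most $4$, this produces the intermediate inequality
\[
\|\vartheta_\e\|_{L^{2m_\vartheta}(\partial B_i)}^{2m_\vartheta}\lesssim \rho_i^{-1}\int_{A_i}\vartheta_\e^{2m_\vartheta}\,dx+\int_{A_i}\vartheta_\e^{2m_\vartheta-1}|\nabla\vartheta_\e|\,dx.
\]

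For the first term, Hölder's inequality with exponents $3$ and $3/2$ gives $\int_{A_i}\vartheta_\e^{2m_\vartheta}\le |A_i|^{1/3}\,\|\vartheta_\e\|_{L^{3m_\vartheta}(A_i)}^{2m_\vartheta}$, and since $|A_i|\lesssim \rho_i^3$ the dangerous prefactor $\rho_i^{-1}$ is cancelled exactly, producing $C\,\|\vartheta_\e\|_{L^{3m_\vartheta}(A_i)}^{2m_\vartheta}$. For the second term, the chain rule rewrites
\[
\vartheta_\e^{2m_\vartheta-1}|\nabla\vartheta_\e|=\tfrac{2}{m_\vartheta}\,\vartheta_\e^{3m_\vartheta/2}\,\bigl|\nabla|\vartheta_\e|^{m_\vartheta/2}\bigr|,
\]
so Cauchy–Schwarz splits the integral as $\|\vartheta_\e\|_{L^{3m_\vartheta}(A_i)}^{3m_\vartheta/2}\,\|\nabla|\vartheta_\e|^{m_\vartheta/2}\|_{L^2(A_i)}$, and Young's inequality then separates the factors into the sum $\|\vartheta_\e\|_{L^{3m_\vartheta}(A_i)}^{3m_\vartheta}+\|\nabla|\vartheta_\e|^{m_\vartheta/2}\|_{L^2(A_i)}^{2}$. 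Collecting the three contributions gives exactly the claimed bound with a constant depending only on $m_\vartheta$.

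The subtle point, and the main obstacle, is that a naive rescaling $y=(x-\e z_i)/\rho_i$ sending $A_i$ to a fixed annulus and followed by a standard trace–Sobolev chain produces the wrong $\rho_i$‑scaling on the right‑hand side: the three terms in the statement have different homogeneity, so after rescaling one is forced to compare $\rho_i^{-1}\|v\|_{L^6}^6$ against $\|v\|_{L^6}^6$ with $\rho_i\to 0$, which fails. Performing the FTC argument in the \emph{original} variables is precisely what balances the exponents: the $\rho_i^{-1}$ arising from the trace identity is cancelled exactly by the factor $|A_i|^{1/3}\lesssim\rho_i$ coming from Hölder on the cross‑over term. This cancellation relies essentially on the fact that the outer radius of $A_i$ is a fixed multiple of the inner radius, so that $\rho_i\le r\le 2\rho_i$ and $\rho_i^2\le r^2\le 4\rho_i^2$ hold pointwise throughout the annulus, ensuring the constant $C$ is independent of both $\e$ and $i$.
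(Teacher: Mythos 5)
Your argument is correct, and it is worth noting that the paper itself gives no proof of this lemma: it only refers to \cite{LuPokorny21}, where the analogous periodic statement is proved by a trace/interpolation estimate on a reference annulus. Your route is a self-contained, elementary alternative: the fundamental theorem of calculus along radial rays, averaged over $r\in[\rho_i,2\rho_i]$, yields the intermediate bound with the factor $\rho_i^{-1}$ on the zero-order term; H\"older with exponents $3/2$ and $3$ cancels this factor against $|2B_i\setminus B_i|^{1/3}\lesssim\rho_i$; and the identity $\vartheta_\e^{2m_\vartheta-1}|\nabla\vartheta_\e|=\tfrac{2}{m_\vartheta}\vartheta_\e^{3m_\vartheta/2}\,|\nabla \vartheta_\e^{m_\vartheta/2}|$ combined with Cauchy--Schwarz and Young produces exactly the three terms of the statement, with a constant depending only on $m_\vartheta$ and independent of $\e$ and $i$ because the outer radius is a fixed multiple of the inner one. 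Two remarks. First, to make the FTC step rigorous you should carry it out for $w:=\vartheta_\e^{m_\vartheta/2}\in H^1(D_\e)$, observing that $\vartheta_\e^{2m_\vartheta}=w^4\in W^{1,1}(2B_i\setminus B_i)$ with $\nabla(w^4)=4w^3\nabla w\in L^1$ (a product of two $L^2$ functions, since $\vartheta_\e\in L^{3m_\vartheta}$), so that the absolute continuity on almost every ray and the identification of the radial boundary values with the Sobolev trace on $\partial B_i$ are justified; this is routine but should be stated. Second, the ``obstacle'' you invoke to motivate working in unscaled variables is not a genuine one: rescaling to the unit annulus does work, provided one uses the multiplicative trace estimate $\|w\|_{L^4(\partial B_1)}^4\lesssim\|\nabla w\|_{L^2}\|w\|_{L^6}^3+\|w\|_{L^6}^4$ there and applies Young's inequality only \emph{after} scaling back; every term then scales like $\rho_i^{-2}$ and the homogeneities match exactly. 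The mismatch you describe arises only if Young is applied on the reference annulus first, so your preference for the unscaled computation is a matter of convenience rather than necessity; both roads lead to the stated estimate.
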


The last ingredient we need is a trace estimate for the whole boundary of the holes, which was again given in \cite[Corollary 4.1]{LuPokorny21}.
\begin{corollary}\label{cor:TraceBound}
Under the assumptions of Lemma \ref{lem:MainProp} and Theorem \ref{thm:main}, we have for any $z_i\in\Phi^\e(D)$ and for $\e>0$ small enough
\begin{align*}
\|\vartheta_\e\|_{L^{2m_\vartheta}(\cup_{z_i\in\Phi^\e(D)} \d B_{\e^\alpha r_i}(\e z_i))}\leq C\e^{-\frac{1}{2m_\vartheta}}.
\end{align*}
\end{corollary}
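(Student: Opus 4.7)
The strategy is to sum the single-ball bound from Lemma~\ref{lem:traceSingleBall} over all $z_i \in \Phi^\e(D)$, so that the left-hand side becomes the full $L^{2m_\vartheta}(\cup_i \d B_i)^{2m_\vartheta}$-norm. Writing $B_i := B_{\e^\alpha r_i}(\e z_i)$ and $2B_i := B_{2\e^\alpha r_i}(\e z_i)$, Lemma~\ref{lem:MainProp} applied with $\tau = 2$ (and the choice of $\kappa < \alpha - 1$) guarantees, for $\e$ small enough, that the enlarged balls $2B_i$ are pairwise disjoint and contained in $D$. This disjointness is the key structural fact that allows sums of local integrals over $2B_i \setminus B_i$ to be controlled by global norms on $D$.

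Taking the $2m_\vartheta$-th power of the desired inequality, it suffices to estimate
\[
S := \sum_{z_i \in \Phi^\e(D)} \|\vartheta_\e\|_{L^{2m_\vartheta}(\d B_i)}^{2m_\vartheta} \;\leq\; C\sum_i \Bigl(\|\nabla |\vartheta_\e|^{m_\vartheta/2}\|_{L^2(2B_i\setminus B_i)}^2 + \|\vartheta_\e\|_{L^{3m_\vartheta}(2B_i\setminus B_i)}^{3m_\vartheta} + \|\vartheta_\e\|_{L^{3m_\vartheta}(2B_i\setminus B_i)}^{2m_\vartheta}\Bigr).
\]
The first two sums have the correct homogeneity: by disjointness of the $2B_i$ they are bounded by $\|\nabla|\vartheta_\e|^{m_\vartheta/2}\|_{L^2(D)}^2 + \|\vartheta_\e\|_{L^{3m_\vartheta}(D)}^{3m_\vartheta}$, which is $O(1)$ thanks to the uniform bounds established in the previous section.

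The main obstacle is the third sum, where the exponent $2m_\vartheta$ is smaller than the natural exponent $3m_\vartheta$, so one cannot directly exploit disjointness. Here I would apply Hölder's inequality in the counting measure with exponents $3/2$ and $3$, giving
\[
\sum_i \|\vartheta_\e\|_{L^{3m_\vartheta}(2B_i\setminus B_i)}^{2m_\vartheta} \;\leq\; N_\e^{1/3} \biggl(\sum_i \|\vartheta_\e\|_{L^{3m_\vartheta}(2B_i\setminus B_i)}^{3m_\vartheta}\biggr)^{2/3},
\]
where $N_\e := \# \Phi^\e(D)$. By the strong law of large numbers (Lemma~\ref{lem:SLLN} together with Remark~\ref{rmk:SLLN}) one has $N_\e \lesssim \e^{-3}$ almost surely for $\e$ small, while the second factor is again $O(1)$ by the disjointness argument. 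Hence this sum is bounded by $C\e^{-1}$.

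Combining these three estimates yields $S \leq C\e^{-1}$, and taking the $2m_\vartheta$-th root gives the desired bound
\[
\|\vartheta_\e\|_{L^{2m_\vartheta}(\cup_i \d B_i)} \leq C\e^{-1/(2m_\vartheta)}.
\]
The only delicate point is to verify that the exponent $-1/(2m_\vartheta)$ arises precisely from the counting factor $N_\e \lesssim \e^{-3}$ raised to the power $1/3$, and no additional $\e$-losses appear: this works because the disjointness of $\{2B_i\}$ provided by Lemma~\ref{lem:MainProp} lets the natural $L^{3m_\vartheta}$-sum be absorbed into a fixed global norm with no growth in $\e$.
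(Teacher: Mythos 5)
Your proposal is correct and follows essentially the same route as the paper: sum the single-ball estimate of Lemma~\ref{lem:traceSingleBall}, absorb the gradient and $3m_\vartheta$-power terms into global norms via disjointness of the enlarged balls, and handle the remaining term $\sum_i\|\vartheta_\e\|_{L^{3m_\vartheta}(2B_i\setminus B_i)}^{2m_\vartheta}$ by discrete H\"older with exponents $3/2$ and $3$ together with the bound $N_\e\lesssim\e^{-3}$ from Remark~\ref{rmk:SLLN}. No gaps; this is exactly the paper's argument.
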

\begin{proof}
For $z_i\in\Phi^\e(D)$, we set again $B_i:=B_{\e^\alpha r_i}(\e z_i)$ and $2B_i:=B_{2\e^\alpha r_i}(\e z_i)$. Then, using Hölder's inequality and Lemma \ref{lem:traceSingleBall}, we get
\begin{align*}
\int_{\cup_{z_i\in\Phi^\e(D)} \d B_i} |\vartheta_\e|^{2m_\vartheta}&=\sum_{z_i\in\Phi^\e(D)}\int_{\d B_i} |\vartheta_\e|^{2m_\vartheta}\\
&\lesssim\sum_{z_i\in\Phi^\e(D)} \bigg(\int_{2B_i\setminus B_i} |\vartheta_\e|^{3m_\vartheta}\bigg)^\frac23\\
&\quad+\sum_{z_i\in\Phi^\e(D)} \int_{2B_i\setminus B_i}\big|\nabla|\vartheta_\e|^\frac{m_\vartheta}{2}\big|^2+\sum_{z_i\in\Phi^\e(D)}\int_{2B_i\setminus B_i}|\vartheta|^{3m_\vartheta}\\
&\lesssim \bigg(\sum_{z_i\in\Phi^\e(D)} \int_{2B_i\setminus B_i} |\vartheta_\e|^{3m_\vartheta}\bigg)^\frac23\bigg(\sum_{z_i\in\Phi^\e(D)} 1\bigg)^\frac13\\
&\quad+\int_{D_\e}\big|\nabla|\vartheta_\e|^\frac{m_\vartheta}{2}\big|^2+\int_{D_\e}|\vartheta|^{3m_\vartheta}\\
&\lesssim \big(\#\{z_i\in\Phi^\e(D)\}\big)^\frac13+1,
\end{align*}
where in the last inequality we used the uniform bounds on $\vartheta_\e$ and $\nabla|\vartheta_\e|^\frac{m_\vartheta}{2}$. From Remark \ref{rmk:SLLN}, for $\e>0$ small enough, the number of points $z_i\in\Phi^\e(D)$ is bounded by $C\e^{-3}$, which immediately implies our desired result.
\end{proof}

Summarizing all the above results, we know the existence of an almost surely positive random variable $\e_0(\omega)$ such that for all $0<\e\leq \e_0$ the solution $(\rho_\e,\vb u_\e,\vartheta_\e)$ to \eqref{eq:conteq}-\eqref{specificEnergy}, suitably extended to the whole of $D$, satisfies
\begin{align}\label{finalBounds}
\|\vb u_\e\|_{H^1_0(D)}+\|\rho_\e\|_{L^{\gamma+\Theta}(D)}+\|\vartheta_\e\|_{H^1(D)}+\|\vartheta\|_{L^{3m_\vartheta}(D)}\leq C,
\end{align}
where $\Theta$ is defined in \eqref{definTheta}. Further, $\vartheta_\e$ has a well defined trace on each $\d B_{\e^\alpha r_i}(\e z_i)$, the norm of which is controlled by Corollary \ref{cor:TraceBound}.

\section{Equations in fixed domain}\label{sec:fixedDom}

In this section, we will show the homogenization result for Navier-Stokes-Fourier equations in a randomly perforated domain in the subcritical case $\alpha>3$. The proof of such result in the case of periodically arranged holes is given in \cite[Section 5]{LuPokorny21}. Since their methods apply almost verbatim to our situation, we will just focus on the differences due to the random setting. Again, we will always assume that the moment bound $m_r\geq 3$ in \eqref{Conditionm} in order to control the measures of $D_\e$ and $\d D_\e$.

First, the bounds in \eqref{finalBounds} enable us to extract subsequences such that
\begin{align*}
&\vb u_\e\weak \vb u \text{ weakly in } H_0^1(D),\quad \vb u_\e\to \vb u \text{ strongly in } L^q(D) \text{ for all } 1\leq q<6,\\
&\rho_\e\weak \rho \text{ weakly in } L^{\gamma+\Theta}(D),\\
&\vartheta_e\weak \vartheta \text{ weakly in } H^1(D),\quad \vartheta_\e\to \vartheta \text{ strongly in } L^q(D) \text{ for all } 1\leq q<3m_\vartheta.
\end{align*}

To pass to the limit in the energy balance \eqref{eq:energyBalance}, we use its weak formulation \eqref{eq:weakEnergy} and the fact $\vb u_\e=0$ in $D\setminus D_\e$ to write
\begin{align}\label{eq:LimitEnergy}
\begin{split}
&-\int_D \bigg(\rho_\e E(\rho_\e,\vb u_\e,\vartheta_\e)\vb u_\e+p(\rho_\e,\vartheta_\e)\vb u_\e-\mathbb{S}(\vartheta_\e,\nabla\vb u_\e)\vb u_\e-\kappa(\vartheta_\e)\nabla\vartheta_\e\bigg)\cdot\nabla\psi\\
&\quad+L\int_{\d D} (\vartheta_\e-\vartheta_0)\psi-\int_D (\rho_\e\vb f+\vb g)\cdot\vb u_\e\psi\\
&=\int_{D\setminus D_\e}\kappa(\vartheta_\e)\nabla\vartheta_\e\cdot\nabla\psi-L\int_{\cup_{z_i\in\Phi^\e(D)} \d B_{\e^\alpha r_i}(\e z_i)}(\vartheta_\e-\vartheta_0)\psi\\
&=: I_1+I_2
\end{split}
\end{align}
for any $\psi\in C^1(\overline{D})$, where $E(\rho_\e,\vb u_\e,\vartheta_\e)$ is the total energy from \eqref{totalEnergy}. We want to show that both integrals on the right hand-side vanish as $\e\to 0$. For $I_1$, by Hölder's inequality, we get
\begin{align*}
|I_1|\leq C\|\nabla\psi\|_{L^\infty(D)}(1+\|\vartheta_\e\|_{L^{3m_\vartheta}(D\setminus D_\e)}^{m_\vartheta})\|\nabla\vartheta_\e\|_{L^2(D\setminus D_\e)}|D\setminus D_\e|^\frac16\to 0,
\end{align*}
where we used that $|D\setminus D_\e|\to 0$ by \eqref{measures}. For $I_2$, let us set $B_i:=B_{\e^\alpha r_i}(\e z_i)$. Using Corollary \ref{cor:TraceBound} and that $\|\vartheta_0\|_{L^q(\d D_\e)}$ is uniformly bounded for some $q>1$ w.r.t.~$\e$, together with $\alpha>3$, $m_r>2$ and Lemma \ref{lem:SLLN}, we obtain
\begin{align*}
|I_2|&\lesssim \|\vartheta_\e\|_{L^{2m_\vartheta}(\cup_{z_i\in \Phi^\e(D)}\d B_i)}\bigg|\bigcup_{z_i\in\Phi^\e(D)} \d B_i\bigg|^\frac{2m_\vartheta-1}{2m_\vartheta}+\|\vartheta_0\|_{L^q(\cup_{z_i\in \Phi^\e(D)}\d B_i)}\bigg|\bigcup_{z_i\in\Phi^\e(D)} \d B_i\bigg|^\frac{q-1}{q}\\
&\lesssim \e^{-\frac{1}{2m_\vartheta}}\bigg(\sum_{z_i\in\Phi^\e(D)} \e^{2\alpha} r_i^2\bigg)^\frac{2m_\vartheta-1}{2m_\vartheta}+\bigg(\sum_{z_i\in\Phi^\e(D)} \e^{2\alpha} r_i^2\bigg)^\frac{q-1}{q}\\
&\lesssim \e^\frac{(2\alpha-3)(2m_\vartheta-1)-1}{2m_\vartheta}+\e^\frac{(2\alpha-3)(q-1)}{q}\to 0,
\end{align*}
where we used that $(2\alpha-3)(2m_\vartheta-1)>1$ due to our assumptions $\alpha>\frac{3m_{\vartheta}-2}{m_\vartheta-2}$ and $m_\vartheta>2$.  Hence, letting $\e\to 0$ on the left hand-site of \eqref{eq:LimitEnergy}, we get by the strong convergences of $\vb u_\e$ and $\vartheta_\e$
\begin{align*}
&-\int_D \bigg(\big(\overline{\rho e(\vartheta,\rho)}+\frac12 \rho |\vb u|^2+\overline{p(\vartheta,\rho)}-\mathbb{S}(\vartheta,\nabla\vb u)\big)\vb u-\kappa(\vartheta)\nabla\vartheta\bigg)\cdot\nabla\psi\\
&\quad+L\int_{\d D} (\vartheta-\vartheta_0)\psi=\int_D (\rho\vb f+\vb g)\cdot \vb u\psi.
\end{align*}
Here, $\overline{f(\vartheta,\rho)}$ denotes the weak limit of a function $f(\vartheta_\e,\rho_\e)$ in some suitable $L^q$-space. Further, the temperature $\vartheta>0$ a.e.~in $D$, which can be proven as shown in \cite[Lemma 5.1]{LuPokorny21} when we replace the estimate $\big|\cup_{n=1}^{N(\e_l)} T_{n,\e_l}\big|\lesssim l^{-3(\alpha-1)}$ therein by \eqref{measures}$_2$ for the specific sequence $\e_l=l^{-1}$.

It remains to show the energy balance for the limit functions, which is in fact a consequence of the strong convergence of the density $\rho_\e$ to $\rho$ at least in $L^1(D)$. In fact, the strong convergence holds in $L^q(D)$ for any $1\leq q<\gamma+\Theta$. Since the proof of this fact is nowadays well understood and applies verbatim to our case of a random perforation, we refer to \cite[Section 5.3]{LuPokorny21}. We note that equation (5.23) in there should read $\gamma+\Theta\geq \gamma+1>3$.

We now turn to the continuity and momentum equation. Recall that the continuity equation holds in the weak and renormalized sense \eqref{eq:weakContinuity} and \eqref{eq:renormalized}, so we obtain by the strong convergence of $\vb u_\e$ to $\vb u$
\begin{align}\label{eq:limitCont}
\div(\rho \vb u)=0 \text{ in } \mathcal{D}^\prime(\R^3)
\end{align}
and
\begin{align*}
\div(\overline{b(\rho)}\vb u)+\overline{(\rho b^\prime(\rho)-b(\rho))\div(\vb u)}=0 \text{ in } \mathcal{D}^\prime(\R^3),
\end{align*}
where we denote by $\overline{f(\rho)}$ the weak limit of a function $f(\rho_\e)$ in some suitable $L^q$-space. Moreover, by Remark \ref{rem:renormalized}, \eqref{eq:limitCont} implies that the couple $(\rho,\vb u)$ fulfils the renormalized continuity equation \eqref{eq:renormalized} for any $b\in C([0,\infty))\cap C^1((0,\infty))$ satisfying the conditions of Remark \ref{rem:renormalized}.

To pass to the limit in the momentum equation, we need to construct suitable test functions. To this end, we recall a lemma from \cite{BellaOschmann2021a}:

\begin{lemma}\label{lm:cutoff}
Let $\alpha > 2$, $D\subset\R^3$ be a bounded $C^2$ domain with $0 \in D$, and $(\Phi,\mathcal{R})=(\{z_i\},\{r_i\})$ be a marked Poisson point process with intensity $\lambda > 0$ and $r_i \ge 0$ with $\mathbb{E}(r_i^{m_r}) < \infty$ for ${m_r>\max\{3/(\alpha-2),3\}}$. Then for any $1 < q < 3$ such that $(3-q)\alpha - 3 > 0$ and for almost every $\omega$ there exist a positive $\e_0(\omega)$ and a family of functions $\{ g_\eps\}_{\eps > 0} \subset W^{1,q}(D)$ such that for $0 < \e \leq \e_0$,
 \begin{equation}
  g_\e = 0 \quad \textrm{ in } \bigcup_{z_j \in \Phi^\eps(D)} B_{\eps^\alpha r_j}(\eps z_j), \qquad g_\e \to 1 \quad \textrm{ in } W^{1,q}(D) \textrm{ as } \eps \to 0
 \end{equation}
 and there is a constant $C>0$ such that
 \begin{equation}
  \| g_\e - 1 \|_{W^{1,q}(D)} \le C\e^{\sigma} \qquad \textrm{ with } \sigma := ((3-q)\alpha-3)/q. 
 \end{equation}
\end{lemma}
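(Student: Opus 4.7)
The plan is to construct $g_\e$ as a product of local cutoffs, one per hole. Fix a Lipschitz $\chi:[0,\infty)\to[0,1]$ with $\chi\equiv 0$ on $[0,1]$, $\chi\equiv 1$ on $[2,\infty)$ and $|\chi'|\le 2$, and for each $z_j\in\Phi^\e(D)$ set
\[
g_\e^{(j)}(x) := \chi\bigl(|x-\e z_j|/(\e^\alpha r_j)\bigr),
\]
interpreted as $g_\e^{(j)}\equiv 1$ if $r_j=0$. I then define $g_\e := \prod_{z_j\in\Phi^\e(D)} g_\e^{(j)}$, so that $g_\e$ vanishes inside every hole (one factor vanishes), equals $1$ outside $\bigcup_j B_{2\e^\alpha r_j}(\e z_j)$, and lies in $W^{1,\infty}(D)\subset W^{1,q}(D)$. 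The task reduces to controlling $\|g_\e-1\|_{W^{1,q}(D)}$.

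For $\e$ small (depending on $\omega$), Lemma~\ref{lem:MainProp} applied with $\tau=2$ and an admissible $\kappa$ (which exists since $\alpha-1-3/m_r>1$ is equivalent to the assumption $m_r>3/(\alpha-2)$) guarantees that the enlarged balls $B_{2\e^\alpha r_j}(\e z_j)$ are contained in the already-disjoint balls $B_{\e^{1+\kappa}}(\e z_j)$, and are hence themselves pairwise disjoint. Moreover, since each $z_j\in\Phi^\e(D)$ satisfies $\dist(\e z_j,\partial D)>\e\gg\e^\alpha r_j$, each $B_{2\e^\alpha r_j}(\e z_j)$ sits inside $D$. Thus at every point at most one factor of $g_\e$ differs from $1$, and I obtain
\[
\|g_\e-1\|_{L^q(D)}^q \;\lesssim\; \sum_{z_j\in\Phi^\e(D)} |B_{2\e^\alpha r_j}(\e z_j)| \;\lesssim\; \e^{3\alpha}\!\!\sum_{z_j\in\Phi^\e(D)}\!\! r_j^3.
\]
Since $m_r>3$ ensures $\E(r^3)<\infty$, Lemma~\ref{lem:SLLN} and Remark~\ref{rmk:SLLN} yield $\sum_j r_j^3\lesssim\e^{-3}$ almost surely, so $\|g_\e-1\|_{L^q(D)}\lesssim\e^{3(\alpha-1)/q}$, which is strictly stronger than $\e^\sigma$.

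The crucial bound is on the gradient. On each annulus $B_{2\e^\alpha r_j}\setminus B_{\e^\alpha r_j}$ only the $j$-th factor is non-constant and $|\nabla g_\e^{(j)}|\lesssim(\e^\alpha r_j)^{-1}$, while elsewhere $\nabla g_\e=0$. Therefore
\[
\|\nabla g_\e\|_{L^q(D)}^q \;\lesssim\; \sum_{z_j\in\Phi^\e(D)} (\e^\alpha r_j)^{-q}\cdot|B_{2\e^\alpha r_j}| \;\lesssim\; \e^{(3-q)\alpha} \!\!\sum_{z_j\in\Phi^\e(D)}\!\! r_j^{3-q}.
\]
Since $1<q<3$ gives $0<3-q<2<m_r$, the bound $\E(r^{3-q})<\infty$ holds and Lemma~\ref{lem:SLLN} again produces $\sum_j r_j^{3-q}\lesssim\e^{-3}$. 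Combining these estimates gives $\|\nabla g_\e\|_{L^q(D)}^q\lesssim\e^{(3-q)\alpha-3}=\e^{q\sigma}$, i.e.\ $\|\nabla g_\e\|_{L^q(D)}\lesssim\e^\sigma$, which finishes the proof.

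The main obstacle is to ensure that the simple product construction does not create interference between neighboring holes; this is resolved by the quantitative disjointness furnished by Lemma~\ref{lem:MainProp}, which in turn dictates that an almost-sure threshold $\e_0(\omega)$ appears. The other delicate point is converting the random sums $\sum_j r_j^3$ and $\sum_j r_j^{3-q}$ into the correct $\e$-power; this is handled by the strong law of large numbers in Lemma~\ref{lem:SLLN} (with Remark~\ref{rmk:SLLN} taking care of the non-star-shaped but $C^2$ domain $D$), and is where the moment assumption $m_r>3$ is actually used. The condition $(3-q)\alpha-3>0$ is precisely what makes $\sigma>0$, so that $g_\e\to 1$ in $W^{1,q}(D)$.
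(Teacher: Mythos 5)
Your proposal is correct and follows essentially the same route as the paper: per-hole cutoffs vanishing on $B_{\e^\alpha r_j}(\e z_j)$ and equal to one outside $B_{2\e^\alpha r_j}(\e z_j)$ with gradient of order $(\e^\alpha r_j)^{-1}$, disjointness of the enlarged balls via Lemma~\ref{lem:MainProp}, and the strong law of large numbers (Lemma~\ref{lem:SLLN}, Remark~\ref{rmk:SLLN}) to convert $\sum_j r_j^3$ and $\sum_j r_j^{3-q}$ into the powers $\e^{3(\alpha-1)/q}$ and $\e^\sigma$. The only cosmetic difference is your explicit product-of-Lipschitz-cutoffs construction in place of the paper's smooth cutoffs, which changes nothing in the estimates.
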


\begin{proof}
By $m_r>3/(\alpha-2)$ and Lemma \ref{lem:MainProp}, all the balls $\{B_{2\e^\alpha r_j}(\e z_j)\}_{z_j\in\Phi^\e(D)}$ are disjoint. Thus, there exist functions $g_\e\in C^\infty(D)$ such that
\begin{align*}
&0\leq g_\e\leq 1,\quad g_\e = 0 \textrm{ in } \bigcup_{z_j \in \Phi^\eps(D)} B_{\eps^\alpha r_j}(\eps z_j),\quad g_\e = 1 \textrm{ in } D\setminus\bigcup_{z_j \in \Phi^\eps(D)} B_{2\eps^\alpha r_j}(\eps z_j),\\
&\|\nabla g_\e\|_{L^\infty(B_{2\e^\alpha r_j}(\e z_j))}\leq C (\e^\alpha r_j)^{-1} \textrm{ for all } z_j\in\Phi^\e(D),
\end{align*}
where the constant $C>0$ is independent of $\e$ and $r_j$. Moreover, since $m_r\geq 3$, \eqref{eq:ergodic} yields 
$\lim\limits_{\eps \to 0} \eps^3 \sum_{z_j \in \Phi^\e(D)} r_j^3 = C$, thus implying
\begin{equation}\nonumber
 \biggl| \bigcup_{z_j \in \Phi^\e(D)} B_{2\eps^\alpha r_j}(\eps z_j) \biggr| \le |B_2| \eps^{3\alpha} \sum_{z_j \in \Phi^\e(D)} r_j^3 \le C \eps^{3(\alpha-1)}
\end{equation}
for $\eps > 0$ small enough. This together with direct calculation yields that for any $1<q<3$,
\begin{align*}
\|1-g_\e\|_{L^q(D)}\leq C\e^\frac{3(\alpha-1)}{q},\quad \|\nabla g_\e\|_{L^q(D)}\leq C\e^{\frac{(3-q)\alpha-3}{q}},
\end{align*}
which finally leads to
\begin{align*}
\|1-g_\e\|_{W^{1,q}(D)}=\|1-g_\e\|_{L^q(D)}+\|\nabla g_\e\|_{L^q(D)}\leq C\e^\sigma.
\end{align*}
\end{proof}

Using the cut-off functions from Lemma \ref{lm:cutoff}, the proof of Lemma 5.2 in \cite{LuPokorny21} applies verbatim to our situation, yielding the following result:
\begin{lemma}
Under the assumptions of Theorem \ref{thm:main}, there holds
\begin{align*}
\div(\rho_\e\vb u_\e\otimes\vb u_\e)+\nabla p(\vartheta_\e,\rho_\e)-\div\mathbb{S}(\vartheta,\nabla\vb u_\e)=\rho_\e\vb f+\vb g+F_\e \text{ in } \mathcal{D}^\prime(D),
\end{align*}
where $F_\e$ is a distribution satisfying
\begin{align*}
|\langle F_\e,\phi\rangle_{\mathcal{D}^\prime(D),\mathcal{D}(D)}|\leq C\e^\sigma\big(\|\nabla\phi\|_{L^{\frac{3(\gamma+\Theta)}{2(\gamma+\Theta)-3}+\xi}(D)}+\|\phi\|_{L^{r_1}(D)}\big)
\end{align*}
for all $\phi\in \mathcal{D}(D)$, where $\Theta$ is defined in \eqref{definTheta} and $\sigma,\xi,r_1$ are defined such that the following conditions are fulfiled:
\begin{align*}
&0<\xi<1,\quad 0<h(\xi):=3(\alpha-1)\bigg(\frac{3(\gamma+\Theta)}{2(\gamma+\Theta)-3}+\xi\bigg)^{-1}-\alpha,\\
&1<r_1<\infty,\quad \frac{1}{r_1}+\bigg(\frac{3(\gamma+\Theta)}{2(\gamma+\Theta)-3}+\xi\bigg)^{-1}=\frac{2(\gamma+\Theta)-3}{3(\gamma+\Theta)},\\
&0<\sigma<\infty,\quad \sigma:=\min\left\lbrace\frac{3(\alpha-1)}{r_1},h(\xi)\right\rbrace.
\end{align*}
\end{lemma}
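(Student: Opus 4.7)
The plan is to transplant test functions from $\mathcal{D}(D_\e)$ to $\mathcal{D}(D)$ by means of the cut-off $g_\e$ constructed in Lemma \ref{lm:cutoff}. Given any $\phi\in\mathcal{D}(D)$, the product $g_\e\phi$ vanishes on every hole $B_{\e^\alpha r_j}(\e z_j)$ and hence is admissible in the weak momentum balance \eqref{eq:weakMomentum} posed on $D_\e$. Subtracting this from the formal identity tested against $\phi$ on $D$, one identifies (up to a sign)
\begin{align*}
\langle F_\e,\phi\rangle=-\int_D\Bigl[&p(\rho_\e,\vartheta_\e)\div\bigl((1-g_\e)\phi\bigr)+(\rho_\e\vb u_\e\otimes\vb u_\e):\nabla\bigl((1-g_\e)\phi\bigr)\\
&-\mathbb{S}(\vartheta_\e,\nabla\vb u_\e):\nabla\bigl((1-g_\e)\phi\bigr)+(\rho_\e\vb f+\vb g)\cdot(1-g_\e)\phi\Bigr].
\end{align*}
Decomposing $\nabla((1-g_\e)\phi)=-(\nabla g_\e)\phi+(1-g_\e)\nabla\phi$, the first contribution lives on the thin skin $\operatorname{supp}(\nabla g_\e)$, while the second is supported in the hole union $U_\e:=\bigcup_j B_{2\e^\alpha r_j}(\e z_j)$ with $|U_\e|\lesssim\e^{3(\alpha-1)}$ by \eqref{measures}.

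I would then estimate term by term via H\"older, taking the convective piece as the leading case since it has the lowest integrability. From \eqref{finalBounds} and $H_0^1(D)\hookrightarrow L^6(D)$, one has $\|\rho_\e\vb u_\e\otimes\vb u_\e\|_{L^{s}(D)}\leq C$ with $1/s=1/(\gamma+\Theta)+1/3$, whose dual exponent is exactly $s'=3(\gamma+\Theta)/(2(\gamma+\Theta)-3)$. For the $(\nabla g_\e)\phi$ part, H\"older with the three exponents $s$, $s'+\xi$, and $r_1$ (whose reciprocals sum to one, which is the identity defining $r_1$ in the statement) together with $\|\nabla g_\e\|_{L^{s'+\xi}}\lesssim\e^{((3-(s'+\xi))\alpha-3)/(s'+\xi)}=\e^{h(\xi)}$ from Lemma \ref{lm:cutoff} yields a factor $\lesssim\e^{h(\xi)}\|\phi\|_{L^{r_1}}$. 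For the $(1-g_\e)\nabla\phi$ part, using $0\leq 1-g_\e\leq\mathbf{1}_{U_\e}$ and H\"older with the same triple delivers $|U_\e|^{1/r_1}\|\nabla\phi\|_{L^{s'+\xi}}\lesssim\e^{3(\alpha-1)/r_1}\|\nabla\phi\|_{L^{s'+\xi}}$. Setting $\sigma:=\min(h(\xi),\,3(\alpha-1)/r_1)$ closes the convective estimate in the required form.

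The remaining three terms are treated by the same scheme. The pressure $p(\rho_\e,\vartheta_\e)\lesssim\rho_\e^\gamma+\rho_\e\vartheta_\e$ lies in an $L^t$-space whose dual exponent is no worse than $s'$ under $\gamma>2$; the viscous stress $\mathbb{S}(\vartheta_\e,\nabla\vb u_\e)$ sits in $L^{6m_\vartheta/(3m_\vartheta+2)}$, whose dual $6m_\vartheta/(3m_\vartheta-2)$ is likewise controlled by $s'+\xi$ under \eqref{conditionAlpha}; the force term is immediate via $|U_\e|^{1/r_1}\|\phi\|_{L^{r_1}}$. Each contributes at least a factor $\e^\sigma$, giving the stated bound. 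The main obstacle, and the only genuinely delicate point, is the consistency check that $0<\xi<1$, $h(\xi)>0$, and $r_1>1$ can be achieved simultaneously: the constraint $h(\xi)>0$ forces $s'+\xi<3(\alpha-1)/\alpha$, which in turn demands $\alpha$ be sufficiently large compared to $\gamma+\Theta$, and this is precisely what \eqref{conditionAlpha} encodes. Once consistency is verified, the remainder is bookkeeping that mirrors the periodic argument of Lu and Pokorn\'y \cite{LuPokorny21}, the only new ingredient being the random cut-off bound of Lemma \ref{lm:cutoff}.
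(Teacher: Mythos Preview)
Your proposal is correct and takes essentially the same approach as the paper: multiply test functions by the cut-off $g_\e$ from Lemma~\ref{lm:cutoff}, insert $g_\e\phi$ into the weak momentum balance~\eqref{eq:weakMomentum}, and estimate the resulting error terms by H\"older using the $W^{1,q}$-decay of $1-g_\e$. The paper itself gives no details beyond citing that ``the proof of Lemma~5.2 in \cite{LuPokorny21} applies verbatim'' once Lemma~\ref{lm:cutoff} is available, so your sketch in fact supplies more than the paper does; the only caveat is that the pressure term $\rho_\e^\gamma$ can have strictly lower integrability than the convective term when $\Theta=\Theta_2<2\gamma-3$, so your assertion that the convective piece is always the ``leading case'' needs a small adjustment in that regime, but this is exactly the bookkeeping you already defer to \cite{LuPokorny21}.
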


To finish the proof of Theorem \ref{thm:main}, we have to show
\begin{align*}
\overline{\rho e(\vartheta,\rho)}=\rho e(\vartheta,\rho),\quad \overline{p(\vartheta,\rho)}=p(\vartheta,\rho).
\end{align*}
By the strong convergence of $\vartheta_\e$ to $\vartheta$ in any $L^q(D)$ for $1\leq q<3m_\vartheta$, it is sufficient to show the strong convergence of $\rho_\e$ to $\rho$, which is proven in \cite[Section 5.3]{LuPokorny21}. To summarize, we finally have that the weak limit $(\rho,\vb u,\vartheta)$ is a solution to problem \eqref{eq:conteq}--\eqref{specificEnergy} in the limit domain $D$. This completes the proof of Theorem \ref{thm:main}.\\
  
\noindent
{\bf Acknowledgement.} The author thanks Peter Bella for helpful discussions on the problem. The author was partially supported by the German Science Foundation DFG in context of
the Emmy Noether Junior Research Group BE 5922/1-1.  
  
\phantomsection 

 \bibliographystyle{amsplain}
 \bibliography{Lit}

\end{document}